\newtheorem{thm}{Theorem}[section]
\newtheorem{lem}[thm]{Lemma}
\theoremstyle{definition}
\newtheorem*{rem}{Remark}
\DeclareMathOperator{\Dio}{Dio}
\DeclareMathOperator{\Card}{Card}
\edef\csname f\@Alph\@tempcnta\endcsname{\noexpand\mathfrak{\@Alph\@tempcnta}}
\edef\csname l\@Alph\@tempcnta\endcsname{\noexpand\mathbb{\@Alph\@tempcnta}}
\edef\csname c\@Alph\@tempcnta\endcsname{\noexpand\mathcal{\@Alph\@tempcnta}}
\title{Mahler's classification and a certain class of $p$-adic numbers}
\author{Tomohiro Ooto}
\date{}
\begin{document}
\address{Graduate School of Pure and Applied Sciences, University of Tsukuba, Tennodai 1-1-1, Tsukuba, Ibaraki, 305-8571, Japan}
\email{ooto@math.tsukuba.ac.jp}
\subjclass[2010]{11J61, 11J82, 11B85}
\keywords{Mahler's classification; automatic sequences}

\begin{abstract}
In this paper, we study a relation between digits of $p$-adic numbers and Mahler's classification.
We show that an irrational $p$-adic number whose digits are automatic, primitive morphic, or Sturmian is an $S$-, $T$-, or $U_1$-number in the sense of Mahler's classification.
Furthermore, we give an algebraic independence criterion for $p$-adic numbers whose digits are Sturmian.
\end{abstract}

\maketitle

\section{Introduction}
Let $p$ be a prime.
We denote by $|\cdot |_p$ the absolute value of the $p$-adic number field $\lQ _p$ normalized to satisfy $|p|_p=1/p$.
We denote by $\lfloor x\rfloor $ the integer part and $\lceil x\rceil $ the upper integer part of a real number $x$.
We set $P:=\{0,1,\ldots ,p-1 \}$.

Let $\mathcal{A}$ be a finite set.
Let $\mathcal{A}^{*}, \mathcal{A}^{+}$, and $\mathcal{A}^{\lN }$ denote the set of finite words over $\mathcal{A}$, the set of nonempty finite words over $\mathcal{A}$, and the set of infinite words over $\mathcal{A}$, respectively.
We denote by $|W|$ the length of a finite word $W$ over $\mathcal{A}$.
For any integer $n\geq 0$, write $W^n=WW\ldots W$ ($n$ times repeated concatenation of the word $W$) and $\overline{W}=WW\ldots W\ldots $ (infinitely many times repeated concatenation of the word $W$).
Note that $W^0$ is equal to the empty word.
More generally, for any real number $w\geq 0$, write $W^{w}=W^{\lfloor w\rfloor}W'$, where $W'$ is the prefix of $W$ of length $\lceil (w-\lfloor w\rfloor )|W|\rceil $.
Let ${\bf a}=(a_n)_{n\geq 0}$ be a sequence over the set $\mathcal{A}$.
We identify ${\bf a}$ with the infinite word $a_0 a_1 \ldots a_n \ldots $.
An infinite word ${\bf a}$ over $\mathcal{A}$ is said to be {\itshape ultimately periodic} if there exist finite words $U\in \mathcal{A}^{*}$ and $V\in \mathcal{A}^{+}$ such that ${\bf a}=U\overline{V}$.

We recall the definition of automatic sequences, primitive morphic sequences, and Sturmian sequences.
Let $k\geq 2$ be an integer.
A {\itshape $k$-automaton} is a sextuplet
\begin{eqnarray*}
A=(Q, \Sigma _k, \delta , q_0, \Delta ,\tau ),
\end{eqnarray*} 
where $Q$ is a finite set, $\Sigma _k =\{ 0,1,\ldots ,k-1 \}$, $\delta :Q\times \Sigma _k\rightarrow Q$ is a map, $q_0 \in Q$, $\Delta $ is a finite set, and $\tau :Q\rightarrow \Delta $ is a map.
For an integer $n\geq 0$, we set $W_n:=w_m w_{m-1}\ldots w_0$, where $\sum _{i=0}^{m}w_i k^i$ is the $k$-ary expansion of $n$.
For $q\in Q$, we define recursively $\delta (q,W_n)$ by $\delta (q,W_n)=\delta (\delta (q,w_m w_{m-1}\ldots w_1), w_0)$.
A sequence ${\bf a}=(a_n)_{n\geq 0}$ is said to be {\itshape $k$-automatic} if there exists a $k$-automaton $A=(Q, \Sigma _k, \delta , q_0, \Delta ,\tau )$ such that $a_n=\tau (\delta (q_0,W_n))$ for all $n\geq 0$.
A sequence ${\bf a}=(a_n)_{n\geq 0}$ is said to be {\itshape automatic} if there exists an integer $k\geq 2$ such that ${\bf a}$ is $k$-automatic.

Let $\mathcal{A}$ and $\mathcal{B}$ be finite sets.
A map $\sigma :\mathcal{A}^{*}\rightarrow \mathcal{B}^{*}$ is said to be a {\itshape morphism} if $\sigma (UV)=\sigma (U)\sigma (V)$ for all $U, V \in \mathcal{A}^{*}$.
We define the {\itshape width} of $\sigma $ by $\max _{a\in \mathcal{A}}|\sigma (a)|$.
We say that $\sigma $ is {\itshape $k$-uniform} if there exists an integer $k\geq 1$ such that $|\sigma (a)|=k$ for all $a\in \mathcal{A}$.
In particular, we call a $1$-uniform morphism a {\itshape coding}.
A morphism $\sigma :\mathcal{A}^{*} \rightarrow \mathcal{A}^{*}$ is said to be {\itshape primitive} if there exists an integer $n\geq 1$ such that $a$ occurs in $\sigma ^n(b)$ for all $a, b \in \mathcal{A}$.
A morphism $\sigma :\mathcal{A}^{*}\rightarrow \mathcal{A}^{*}$ is said to be {\itshape prolongable} on $a\in \mathcal{A}$ if $\sigma (a)=aW$ where $W\in \mathcal{A}^{+}$, and $\sigma ^n(W)$ is not empty word for all $n\geq 1$.
We say that a sequence ${\bf a}=(a_n)_{n\geq 0}$ is {\itshape primitive morphic} if there exist finite sets $\mathcal{A},\mathcal{B}$, a primitive morphism $\sigma :\mathcal{A}^{*}\rightarrow \mathcal{A}^{*}$ which is prolongable on some $a \in\mathcal{A}$, and a cording $\tau :\mathcal{A}^{*}\rightarrow \mathcal{B}^{*}$ such that ${\bf a}=\lim_{n\rightarrow \infty }\tau (\sigma ^n(a))$.

Let $0<\theta <1$ be an irrational real number and $\rho $ be a real number.
For an integer $n\geq 1$, we put $s_{n, \theta ,\rho }:=\lfloor (n+1)\theta +\rho \rfloor - \lfloor n\theta +\rho \rfloor $ and $s_{n, \theta ,\rho } ':=\lceil (n+1)\theta +\rho \rceil -\lceil n\theta +\rho \rceil $.
Note that $s_{n, \theta ,\rho }, s_{n, \theta ,\rho } ' \in \{ 0,1 \}$.
We also put ${\bf s}_{\theta ,\rho }:=(s_{n, \theta ,\rho })_{n\geq 1}$ and ${\bf s}_{\theta ,\rho } ' :=(s_{n, \theta ,\rho } ')_{n\geq 1}$.
A sequence ${\bf a}=(a_n)_{n\geq 1}$ is called {\itshape Sturmian} if there exist an irrational real number $0<\theta <1$, a real number $\rho $, a finite set $\mathcal{A}$, and a coding $\tau :\{ 0,1\} ^{*}\rightarrow \mathcal{A}^{*}$ with $\tau (0)\not= \tau (1)$ such that $(a_n)_{n\geq 1}=(\tau (s_{n, \theta ,\rho }))_{n\geq 1}$ or $(\tau (s_{n, \theta ,\rho } '))_{n\geq 1}$.
Then we call $\theta $ (resp.\ $\rho $) the slope (resp.\ the intercept) of ${\bf a}$.

Applying so-called Subspace Theorem, Adamczewski and Bugeaud \cite{Adamczewski3} established a new transcendence criterion for $p$-adic numbers.

\begin{thm}\label{AB}
Let ${\bf a}=(a_n)_{n\geq 0}$ be a non-ultimately periodic sequence over $P$.
Set $\xi :=\sum_{n=0}^{\infty}a_n p^n \in \lQ _p$.
If the sequence ${\bf a}$ is automatic, primitive morphic, or Sturmian, then the $p$-adic number $\xi $ is transcendental.
\end{thm}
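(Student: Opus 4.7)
The plan is to reduce all three cases to a single transcendence criterion for $p$-adic numbers whose digit sequences admit many long prefix near-repetitions, and then apply the $p$-adic Subspace Theorem. More precisely, I would first prove a combinatorial transcendence criterion of the following shape: if the digit sequence $\mathbf{a}$ is not ultimately periodic but there exist a real number $w>1$ and two sequences $(U_n)$, $(V_n)$ of finite words over $P$ with $|V_n|\to\infty$, $|U_n|/|V_n|$ bounded, and $U_n V_n^{w}$ a prefix of $\mathbf{a}$ for every $n$, then $\xi$ is transcendental. Such prefixes produce explicit rational approximations $\xi_n=\sum_{i<|U_n|}a_i p^i + p^{|U_n|}\,P(V_n)/(1-p^{|V_n|})$ with $|\xi-\xi_n|_p\le p^{-|U_nV_n^{w}|}$; if $\xi$ were algebraic, applying the $p$-adic Subspace Theorem to three suitably chosen linear forms in $(1,\xi,\xi_n)$ (one $p$-adic, two archimedean) would force infinitely many $\xi_n$ to lie on a common proper subspace, which together with the growth condition on $|V_n|$ forces $\mathbf{a}$ to be ultimately periodic, a contradiction.

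The next step is to verify the stammering hypothesis in each of the three cases. For Sturmian sequences this follows from the continued fraction expansion of the slope $\theta$: truncating at a convergent denominator $q_k$ produces prefixes of the form $U_k V_k^{w_k}$ with $|V_k|=q_k$ and $w_k>1+1/a_{k+1}$, and a uniform bound on $|U_k|/|V_k|$ follows from the three-distance theorem. For primitive morphic sequences, one uses the fact that a primitive morphism $\sigma$ with dominant eigenvalue $\lambda>1$ and prolongation fixed point $\mathbf{a}$ admits arbitrarily long prefixes that are iterates $\sigma^n(a)$ sharing the same beginning as $\sigma^{n+1}(a)=\sigma^n(a)\sigma^n(W)$; the uniform growth rate of $|\sigma^n(a)|$ then yields $w>1$ with bounded $|U_n|/|V_n|$. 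For automatic sequences, one invokes Cobham's theorem, which presents $\mathbf{a}$ as the image under a coding of a fixed point of a $k$-uniform morphism, and then applies the same argument as in the primitive morphic case (uniform morphisms being automatically primitive after passing to recurrent states).

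The main obstacle is the combinatorial criterion itself, specifically its Subspace-Theorem step: one must set up the linear forms and the Product Theorem bookkeeping so that the resulting inequality beats the trivial bound only when $w>1$, and then extract from a nontrivial subspace relation an ultimate periodicity of $\mathbf{a}$ rather than merely a rationality statement about $\xi$. The other delicate point is the primitive morphic case, where one cannot directly iterate because $\mathbf{a}$ is the image of a fixed point under a coding $\tau$; handling this requires using primitivity to guarantee that the stammering on the pre-image survives after applying $\tau$, i.e., that $\tau$ does not collapse the near-repetitions $U_n V_n^{w}$ to ultimately periodic images, which is exactly where non-ultimate periodicity of $\mathbf{a}$ is used. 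Once these points are settled, the three cases collapse into the single combinatorial criterion and the theorem follows.
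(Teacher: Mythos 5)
Your overall plan---a stammering transcendence criterion proved with the $p$-adic Subspace Theorem, followed by a verification of the repetition hypothesis for each of the three classes---is precisely the Adamczewski--Bugeaud route; note that the paper does not reprove this statement but cites \cite{Adamczewski3} for it, and its own Theorems \ref{main1} and \ref{main2} run the same Subspace-Theorem machinery in quantitative form. The criterion itself, and the way you propose to exploit the good rational approximations $\xi_n$ with denominators $p^{|V_n|}-1$, is sound. The genuine gaps are in how you verify the hypothesis ``$U_nV_n^{w}$ is a prefix with $w>1$, $|U_n|/|V_n|$ bounded, $|V_n|\to\infty$'' in two of the three cases.

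For the automatic case, the reduction ``uniform morphisms being automatically primitive after passing to recurrent states'' is false: an automatic sequence need not be uniformly recurrent, whereas every primitive morphic sequence is, so automatic sequences cannot in general be funnelled through the primitive morphic case (the characteristic sequence of the powers of $2$ is $2$-automatic but not primitive morphic). The correct route is Cobham's linear complexity bound $p(\mathbf{a},n)\le kd^2n$ (Lemma \ref{auto2}) combined with a pigeonhole argument on factors of length $n$ inside the prefix of length $(\kappa+1)n$, exactly as in the combinatorial lemma of Section 3, which produces prefixes $U_nV_n^{w_n}$ with $w_n\ge 1+1/(4\kappa+2)$, $|U_n|\le 2\kappa|V_n|$ and $|V_n|\ge n/2$. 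For the primitive morphic case, the observation that $\sigma^{n+1}(a)=\sigma^n(a)\sigma^n(W)$ shares the prefix $\sigma^n(a)$ is just the fixed-point nesting and yields no repetition at all: nothing in it produces a word of the form $UV^{w}$ with $w>1$. What does work is either linear recurrence of primitive morphic words (the prefix of length $n$ reoccurs within a window of length $Kn$, and the overlap/non-overlap case split gives exponent $\ge 1+1/K$ with $U_n$ empty or short), or again the linear complexity bound of Moss\'e (Lemma \ref{pri2}) plus the same pigeonhole lemma; and one must also check that the coding $\tau$ commutes with this (it does, since complexity and repetitions of prefixes pass through codings, but your sketch does not say this correctly). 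Finally, in the Sturmian case your continued-fraction argument is standard for characteristic words, but for an arbitrary intercept $\rho$ the uniform bound on $|U_k|/|V_k|$ requires extra care; the complexity-$(n+1)$ route (Lemma \ref{Strumian2}) handles all intercepts uniformly and lets the three cases collapse into the single linear-complexity criterion, which is how both \cite{Adamczewski3} and this paper proceed.
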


In this paper, we study $p$-adic numbers which satisfy the assumption of Theorem \ref{AB} in more detail.
For $\xi \in \lQ _p$ and an integer $n\geq 1$, we define $w_n(\xi )$ (resp.\ $w_n ^{*}(\xi )$) to be the supremum of the real number $w$ (resp.\ $w^{*}$) which satisfy
\begin{eqnarray*}
0<|P(\xi )|_p\leq H(P)^{-w-1}\quad (\mbox{resp.\ } 0<|\xi -\alpha |_p\leq H(\alpha )^{-w^{*}-1})
\end{eqnarray*}
for infinitely many integer polynomials $P(X)$ of degree at most $n$ (resp.\ algebraic numbers $\alpha \in \lQ _p$ of degree at most $n$).
Here, $H(P)$, which is called the {\itshape height} of $P(X)$, is defined by the maximum of the usual absolute values of the coefficients of $P(X)$ and $H(\alpha )$, which is called the {\itshape height} of $\alpha $, is defined by the height of the minimal polynomial of $\alpha $ over $\lZ$.
We set
\begin{eqnarray*}
w(\xi ):=\limsup _{n\rightarrow \infty }\frac{w_n(\xi )}{n}, \quad w^{*}(\xi ):=\limsup _{n \rightarrow \infty }\frac{w_n ^{*}(\xi )}{n}.
\end{eqnarray*}
A $p$-adic number $\xi $ is said to be an
\begin{gather*}
A \mbox{{\itshape -number} if } w(\xi )=0;\\
S \mbox{{\itshape -number} if } 0<w(\xi )<+\infty ;\\
T \mbox{{\itshape -number} if } w(\xi )=+\infty \mbox{ and } w_n(\xi )<+\infty \mbox{ for all } n;\\
U \mbox{{\itshape -number} if } w(\xi )=+\infty \mbox{ and } w_n(\xi )=+\infty \mbox{ for some } n.
\end{gather*}
Mahler \cite{Mahler1} first introduced the classification.
A $p$-adic number is algebraic if and only if it is an $A$-number.
Almost all $p$-adic numbers are $S$-numbers in the sense of Haar measure.
It is known that there exist uncountably many $T$-numbers.
Liouville numbers are $U$-numbers, for example $\sum _{n=1}^{\infty }p^{n!}$.
Replacing $w_n$ and $w$ with $w_n ^{*}$ and $w^{*}$, we define $A^{*}$-, $S^{*}$-, $T^{*}$-, and $U^{*}${\itshape -number} as above.
It is known that the two classification of $p$-adic numbers coincide.
Let $n\geq 1$ be an integer.
For a $U$-number (resp.\ a $U^{*}$-number) $\xi \in \lQ _p$, we say that $\xi $ is a $U_n${\itshape -number} (resp.\ a $U_n ^{*}${\itshape -number}) if $w_n(\xi )$ is infinite and $w_m(\xi )$ are finite (resp.\ $w_n ^{*}(\xi )$ is infinite and $w_m ^{*}(\xi )$ are finite) for all $1\leq m<n$.
The detail is found in \cite[Section 9.3]{Bugeaud1}.

We now state the main results.

\begin{thm}\label{app1}
Let ${\bf a}=(a_n)_{n\geq 0}$ be a non-ultimately periodic sequence over $P$.
Set $\xi :=\sum_{n=0}^{\infty}a_n p^n \in \lQ _p$.
If the sequence ${\bf a}$ is automatic, primitive morphic, or Sturmian with its slope whose continued fraction expansion has  bounded partial quotients, then the $p$-adic number $\xi $ is an $S$- or $T$-number.
Furthermore, if the sequence ${\bf a}$ is Strumian with its slope whose continued fraction expansion has unbounded partial quotients, then the $p$-adic number $\xi $ is a $U_1$-number.
\end{thm}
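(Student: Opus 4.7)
The plan is to split the argument according to the combinatorial structure of the sequence ${\bf a}$ and to translate that structure into Diophantine information about $\xi$. By Theorem~\ref{AB}, the number $\xi$ is already known to be transcendental, so in particular it is not an $A$-number. To prove the first half of the statement, it therefore suffices to show that $w_n(\xi)<+\infty$ for every integer $n\geq 1$; to prove the second half, one has to exhibit algebraic numbers of degree $1$ (that is, rational $p$-adic numbers) approximating $\xi$ arbitrarily well, and then invoke the transcendence of $\xi$ to conclude that it is a $U_1$-number.

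For the first statement, the common thread of the three families is that each of them has finite Diophantine exponent $\Dio({\bf a})$, which measures the maximal order of repetition of the form $UV^w$ that can be found as a prefix of a shift of ${\bf a}$ at any scale. For automatic sequences this follows from standard bounds on the repetitive prefixes of fixed points of uniform morphisms; for primitive morphic sequences it follows from the fact that primitive substitutions have finite critical exponent; for Sturmian sequences the repetitive prefixes are governed by the continued fraction expansion of the slope $\theta$, so bounded partial quotients imply a finite Diophantine exponent. Once $\Dio({\bf a})<+\infty$ is established, one argues by contradiction: if $w_n(\xi)=+\infty$ for some $n$, one takes a sequence of algebraic approximations $\alpha_k\in \lQ_p$ of degree at most $n$ with $|\xi-\alpha_k|_p \leq H(\alpha_k)^{-W_k-1}$ and $W_k\to +\infty$, combines them with the rational approximations to $\xi$ coming from the repetitive prefixes of ${\bf a}$, and feeds the resulting collection of linear forms into a $p$-adic quantitative Subspace Theorem of Evertse--Schlickewei type. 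The output is an upper bound for $W_k$ in terms of $\Dio({\bf a})$ and $n$, contradicting $W_k\to\infty$.

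For the second statement, the hypothesis that the partial quotients of $\theta$ are unbounded produces a sequence of prefixes of ${\bf a}$ of the form $U_k V_k^{w_k}$ with $w_k\to +\infty$ and with $|V_k|$ controlled. The ultimately periodic sequence $U_k\overline{V_k}$ defines a rational $p$-adic number $r_k$ whose denominator divides $p^{|U_k|}(p^{|V_k|}-1)$, so that $H(r_k)$ is essentially $p^{|U_k V_k|}$, and that agrees with $\xi$ in its first $|U_k|+w_k|V_k|$ $p$-adic digits. Hence
\begin{equation*}
|\xi-r_k|_p \leq p^{-(|U_k|+w_k|V_k|)},
\end{equation*}
and the ratio $-\log|\xi-r_k|_p / \log H(r_k)$ grows like $w_k\to\infty$. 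This forces $w_1^{*}(\xi)=+\infty$, and since the classifications by $w_n$ and $w_n^{*}$ coincide, also $w_1(\xi)=+\infty$. Combined with the transcendence of $\xi$, this gives that $\xi$ is a $U_1$-number.

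The main obstacle is the first statement. The combinatorial input, namely $\Dio({\bf a})<+\infty$, has to be matched with a bookkeeping of heights in the Subspace Theorem that is uniform in $n$ and that simultaneously accommodates the three families. The Sturmian case with bounded partial quotients is the most delicate, because the repetitions are only quasi-periodic, so one must use the precise structure of Sturmian prefixes (for instance via standard words) to produce enough independent linear forms. By contrast, the $U_1$ part reduces to an essentially explicit construction of Liouville-type rational approximations once the repetitive prefixes arising from large partial quotients of $\theta$ have been written down.
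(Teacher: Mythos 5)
Your second half is essentially the paper's own route: unbounded partial quotients force arbitrarily long repetitive prefixes (this is the content of Lemma \ref{Strumian1}, i.e.\ infinite Diophantine exponent), the associated ultimately periodic words give rationals $r_k$ with $H(r_k)\le p^{|U_kV_k|}$ and $|\xi-r_k|_p\le p^{-|U_kV_k^{w_k}|}$, whence $w_1(\xi)=+\infty$ and $\xi$ is a $U_1$-number. Two small corrections there: in the $p$-adic setting the denominator of $r_k$ is $p^{|V_k|}-1$ (the factor $p^{|U_k|}$ belongs to the real, base-$b$ picture; here the height bound comes from the size of the numerator), and you must control $|U_k|$ as well as $|V_k|$, i.e.\ work with the ratio $|U_kV_k^{w_k}|/|U_kV_k|\to\infty$, or the approximation exponent need not blow up.

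The genuine gap is in the first half, in how you exclude $w_1(\xi)=+\infty$. You propose one uniform contradiction scheme for all $n\ge 1$: degree-$\le n$ approximants plus the periodic rational approximants are fed into the quantitative Subspace Theorem, yielding a bound in terms of $\Dio({\bf a})$ and $n$. For $n\ge 2$ this is indeed how Theorem \ref{main1} works, but note two things. First, that step uses no Diophantine-exponent input at all: it needs only the linear complexity, through pigeonhole repetitions of exponent at least $1+1/(4\kappa+2)$ with heights growing geometrically but with bounded ratio, and its conclusion is only $w_d^*(\xi)\le\max\bigl(w_1(\xi),C(d,\kappa)\bigr)$, i.e.\ it reduces everything to the degree-one exponent. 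Second, and decisively, the subspace argument cannot treat $d=1$: inside it one applies the Liouville inequality to $\alpha$ and the rational point $y/z$ cut out by a subspace containing many approximant vectors, which requires $\alpha\neq y/z$; this is automatic for irrational $\alpha$ but fails when $\alpha$ itself is rational, and it must fail, since without finiteness of $\Dio({\bf a})$ the conclusion $w_1(\xi)<\infty$ is simply false (your own $U_1$ case). The degree-one case therefore needs a separate, elementary argument, and this is exactly where $\Dio({\bf a})<\infty$ enters in the paper (Theorem \ref{main2}): finiteness of the exponent gives a lower bound $|\xi-\alpha_n|_p\ge p^{-\delta|U_nV_n|}$ for the periodic approximants, which combined with the upper bound $p^{-(1+1/(4\kappa+2))|U_nV_n|}$ and the height gap $\beta_{k+1}\le\beta_k^{4(\kappa+1)^2}$ (again a consequence of linear complexity) yields $w_1(\xi)\le 8(\kappa+1)^2(2\kappa+1)\Dio({\bf a})-1$ by a two-rational Liouville comparison. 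Your sketch neither isolates this case nor provides the lower-bound/height-gap bookkeeping, so the claim that ``the output is an upper bound for $W_k$'' is unjustified precisely for the exponent $w_1$ that separates $S$/$T$ from $U_1$.
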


Theorem \ref{app1} is an extension of Theorem \ref{AB} and an analogue of Th\'eor\`emes 3.1, 4.2, and 5.1 in \cite{Adamczewski5}.

\begin{thm}\label{app2}
Let $\theta  >1$ be a real number whose continued fraction expansion has  bounded partial quotients, $\theta  '>1$ be a real number whose continued fraction expansion has unbounded partial quotients, and $\rho  ,\rho  '$ be real numbers.
Then the $p$-adic numbers
\begin{eqnarray*}
\sum _{n=1}^{\infty } p^{\lfloor n\theta +\rho  \rfloor },\quad \sum _{n=1}^{\infty } p^{\lfloor n\theta '+\rho  '\rfloor }
\end{eqnarray*}
are algebraically independent.
\end{thm}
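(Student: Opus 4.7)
The plan is to apply Theorem \ref{app1} to each of the two $p$-adic numbers separately, and then to invoke the classical principle that algebraically dependent numbers must lie in the same Mahler class.

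Denote $\xi := \sum_{n\geq 1} p^{\lfloor n\theta +\rho \rfloor}$ and $\xi ' := \sum_{n\geq 1} p^{\lfloor n\theta '+\rho '\rfloor}$. My first step is to identify the $p$-adic digit sequences of $\xi$ and $\xi '$ as Sturmian. For $\theta >1$ irrational, an integer $m\geq 0$ belongs to the Beatty set $\{\lfloor n\theta +\rho \rfloor :n\geq 1\}$ if and only if the half-open interval $[(m-\rho )/\theta ,(m+1-\rho )/\theta )$ contains a positive integer; since this interval has length $1/\theta <1$, the indicator sequence is exactly a Sturmian sequence with slope $1/\theta \in (0,1)$ and an explicit intercept. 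The coding sending $0\mapsto 0$, $1\mapsto 1$ then exhibits the digit sequence of $\xi$ as Sturmian in the sense of the paper. Because the continued fraction expansions of $\theta$ and $1/\theta$ differ only by the leading partial quotient, the digit sequence of $\xi$ has slope with bounded partial quotients, while that of $\xi '$ has slope with unbounded partial quotients.

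Next, Theorem \ref{app1} immediately gives that $\xi$ is an $S$-number or a $T$-number, and that $\xi '$ is a $U_1$-number. In particular $\xi$ is not a $U$-number, so $\xi$ and $\xi '$ belong to two distinct Mahler classes.

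The final step is to invoke the $p$-adic analogue of a theorem of Schneider (see \cite[Chapter~9]{Bugeaud1}): if two $p$-adic numbers are algebraically dependent over $\lQ$, then they belong to the same Mahler class. Applied contrapositively, $\xi$ and $\xi '$ are algebraically independent. The main obstacle I expect is the bookkeeping in the first step, namely keeping track of whether the digit sequence realizes the floor-based or the ceiling-based Sturmian characterization, and verifying the passage between the continued fraction expansions of $\theta$ and $1/\theta$. Together with ensuring that the preservation-of-Mahler-class statement is available in the $p$-adic setting in the precise form required, these constitute the only non-routine points.
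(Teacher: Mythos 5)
Your proposal is correct and follows essentially the same route as the paper: identifying the digit sequences as Sturmian with slopes $1/\theta$ and $1/\theta'$ (this is the paper's Lemma \ref{St}), applying Theorem \ref{app1} to place the two numbers in different Mahler classes, and concluding by the invariance of Mahler's classification under algebraic dependence (the paper's Lemma \ref{Ma}, due to Mahler). The only difference is cosmetic, namely your direct Beatty-set argument in place of citing the stated lemma.
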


Theorem \ref{app2} is an analogue of Corollaire 3.2 in \cite{Adamczewski5}.

This paper is organized as follows.
In Section \ref{‰ž—p}, we state Theorems \ref{main1} and \ref{main2}, and prove the main results assuming Theorems \ref{main1} and \ref{main2}.
We prepare some lemmas to prove Theorems \ref{main1} and \ref{main2} in Section \ref{€"õ}.
In Section \ref{Ø–¾}, we prove Theorems \ref{main1} and \ref{main2}.

\section{Extension of the main results}\label{‰ž—p}

Let ${\bf a}=(a_n)_{n\geq 0}$ be a sequence over a finite set $\mathcal{A}$.
The {\itshape $k$-kernel} of ${\bf a}=(a_n)_{n\geq 0}$ is the set of all sequences $(a_{k^i m+j})_{m\geq 0}$, where $i\geq 0$ and $0\leq j<k^i$.

Eilenberg \cite{Eilenberg} characterized $k$-automatic sequences.

\begin{lem}
Let $k\geq 2$ be an integer.
Then a sequence is $k$-automatic if and only if its $k$-kernel is finite.
\end{lem}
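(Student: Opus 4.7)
The plan is to establish the two implications separately.

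\textbf{Forward direction.} Suppose ${\bf a}$ is generated by a $k$-automaton $A=(Q,\Sigma_k,\delta,q_0,\Delta,\tau)$. After a standard reduction I may assume that $\delta(q_0,0)=q_0$, so that prepending leading zeros does not affect the state reached from $q_0$. For any integer $k^i m+j$ with $0\le j<k^i$, its base-$k$ word equals the concatenation $V_m U_j^{(i)}$, where $V_m$ is the base-$k$ word of $m$ and $U_j^{(i)}$ is the length-$i$ base-$k$ word of $j$ (padded with leading zeros if necessary). Since the author's $\delta$ reads the digits in the order they appear, we obtain
\begin{equation*}
a_{k^i m+j}=\tau\bigl(\delta(\delta(q_0,V_m),U_j^{(i)})\bigr).
\end{equation*}
Defining $\tau_{i,j}:Q\rightarrow\Delta$ by $\tau_{i,j}(q):=\tau(\delta(q,U_j^{(i)}))$, the subsequence $(a_{k^i m+j})_{m\ge 0}$ depends only on the function $\tau_{i,j}$. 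Since there are at most $|\Delta|^{|Q|}$ such functions, the $k$-kernel is finite.

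\textbf{Converse direction.} Assume the $k$-kernel $K=\{{\bf b}^{(1)},\ldots ,{\bf b}^{(s)}\}$ is finite. I would construct an automaton with state set $Q:=K$, initial state $q_0:={\bf a}$, output $\tau({\bf b}):=b_0$, and transitions $\delta({\bf b},c):=(b_{km+c})_{m\ge 0}$ for ${\bf b}=(b_m)_{m\geq 0}\in K$ and $c\in\Sigma_k$. For elements ${\bf b}=(a_{k^i\ell+j})_\ell\in K$, the identity
\begin{equation*}
(b_{km+c})_m=(a_{k^{i+1}m+(k^i c+j)})_m
\end{equation*}
shows that $\delta({\bf b},c)\in K$, so the transition is well-defined. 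A straightforward induction on the number of digits then proves that feeding the base-$k$ digits of $n$ to this automaton in least-significant-first order produces a state whose zeroth entry is $a_n$.

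\textbf{Main obstacle.} The principal subtlety is that the author's $\delta$ processes digits most-significant-first, whereas the converse construction above naturally produces an automaton reading least-significant-first. To bridge this gap I would invoke the classical equivalence, provable by applying the subset construction to the reverse automaton, between $k$-automatic sequences under the two digit orderings. This reversal step, together with the leading-zero normalisation used in the forward direction, is the only delicate point in the argument; the remaining ingredients reduce to the counting estimate $|K|\le|\Delta|^{|Q|}$ and the routine induction mentioned above.
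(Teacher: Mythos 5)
The paper does not prove this lemma at all: it is Eilenberg's classical characterization, quoted with a citation to \cite{Eilenberg}, so there is no in-paper argument to compare against. Your sketch is the standard textbook proof (as in Eilenberg, or Allouche--Shallit) and is essentially correct: the counting bound $|\Delta|^{|Q|}$ via the maps $\tau_{i,j}$ works once leading zeros are normalized by $\delta(q_0,0)=q_0$, and the converse construction with states the kernel elements, transitions $\delta({\bf b},c)=(b_{km+c})_{m\ge 0}$ and output $b_0$ is well defined and closed in $K$ because $k^i c+j<k^{i+1}$. The only points you defer are themselves genuine (but classical) lemmas that you correctly identify: the leading-zero normalization, and the invariance of $k$-automaticity under reversing the digit order, needed because the paper's automata read most-significant digit first while your kernel automaton reads least-significant first. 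One small terminological quibble: for automata with output the reversal step is usually carried out not by the literal subset construction but by determinizing the reversed machine with states the functions $Q\rightarrow Q$ (or $Q\rightarrow\Delta$), as in the standard proof that $k$-automaticity is independent of the reading direction; with that understood, your outline is complete.
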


We say that the sequence ${\bf a}=(a_n)_{n\geq 0}$ is {\itshape $k$-uniform morphic} if there exist finite sets $\mathcal{A},\mathcal{B}$, a $k$-uniform morphism $\sigma :\mathcal{A}^{*}\rightarrow \mathcal{A}^{*}$ which is prolongable on some $a \in\mathcal{A}$, and a coding $\tau :\mathcal{A}^{*}\rightarrow \mathcal{B}^{*}$ such that ${\bf a}=\lim_{n\rightarrow \infty }\tau (\sigma ^n(a))$.
Then we call $\mathcal{A}$ the {\itshape initial alphabet} associated with ${\bf a}$.

Cobham \cite{Cobham} showed another characterization of $k$-automatic sequences using $k$-uniform morphic sequences.

\begin{lem}
Let $k\geq 2$ be an integer.
Then a sequence is $k$-automatic if and only if it is $k$-uniform morphic.
\end{lem}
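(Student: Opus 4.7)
The plan is to establish both directions of the equivalence by explicit constructions converting between an automaton and a morphism--with--coding presentation.

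First I would prove that every $k$-automatic sequence is $k$-uniform morphic. Given an automaton $A=(Q,\Sigma_k,\delta,q_0,\Delta,\tau)$ generating ${\bf a}$, I would take the state set $Q$ as the initial alphabet and define the $k$-uniform morphism $\sigma:Q^{*}\to Q^{*}$ by
\[
\sigma(q)=\delta(q,0)\,\delta(q,1)\cdots\delta(q,k-1).
\]
After a harmless preliminary reduction ensuring $\delta(q_0,0)=q_0$, so that prepending leading zeros to $W_n$ does not change the reached state, $\sigma$ is prolongable on $q_0$, and the limit ${\bf b}=\lim_n \sigma^n(q_0)$ is a well-defined infinite word over $Q$. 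An induction on the length of $W_n$, based on the recursive identity $\delta(q_0,W_n)=\delta(\delta(q_0,W_q),r)$ when $n=kq+r$ with $0\leq r<k$, shows that the $n$-th letter of ${\bf b}$ equals $\delta(q_0,W_n)$. Then $\tau$ plays the role of the coding in the definition of $k$-uniform morphic, yielding ${\bf a}=\tau({\bf b})$.

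For the converse direction, given ${\bf a}=\tau({\bf b})$ with ${\bf b}=\lim_n \sigma^n(a)$, $\sigma:\mathcal{A}^{*}\to\mathcal{A}^{*}$ a $k$-uniform morphism prolongable on $a$, and $\tau:\mathcal{A}^{*}\to\mathcal{B}^{*}$ a coding, I would build the automaton $(\mathcal{A},\Sigma_k,\delta',a,\mathcal{B},\tau)$ where $\delta'(b,j)$ is defined as the letter at position $j$ (zero-indexed) of $\sigma(b)\in\mathcal{A}^{k}$. The analogous induction on the length of $W_n$, run in the opposite direction, gives $\delta'(a,W_n)=b_n$, where $b_n$ denotes the $n$-th letter of ${\bf b}$; applying $\tau$ then recovers $a_n=\tau(\delta'(a,W_n))$, so ${\bf a}$ is $k$-automatic.

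The main obstacle is the normalization step of the forward direction: the paper's $W_n$ is the minimal-length $k$-ary expansion of $n$, so the fixed-point construction only computes $a_n$ correctly when $\delta(q_0,0)=q_0$, i.e., when leading zeros are absorbed at the start state. Handling this cleanly requires either enlarging $Q$ in a preliminary step so that this fixed-point condition holds, or, as in Cobham's original treatment, adjusting the coding to account for the trajectory of $q_0$ under the zero letter. All remaining steps are routine inductions on the length of $W_n$ together with the direct verification that the constructed $\sigma$ is $k$-uniform.
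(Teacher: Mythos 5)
Your proposal is correct: it is the standard proof of Cobham's theorem (the two explicit constructions, automaton-to-morphism via $\sigma(q)=\delta(q,0)\cdots\delta(q,k-1)$ and back, with the only delicate point being the leading-zero normalization $\delta(q_0,0)=q_0$, which you correctly identify and which can be arranged by adding a fresh initial state). The paper itself gives no argument and simply cites Cobham, and your proof is essentially the argument found in that source (and in Allouche--Shallit), so there is nothing to contrast.
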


The {\itshape complexity function} of the sequence ${\bf a}$ is given by
\begin{eqnarray*}
p({\bf a}, n):= \Card \{a_i a_{i+1}\ldots a_{i+n-1}\mid i\geq 0 \} ,\quad \mbox{for } n\geq 1.
\end{eqnarray*}

Let $\rho $ be a real number.
We say that ${\bf a}$ satisfies {\itshape Condition $(*)_{\rho }$} if there exist sequences of finite words $(U_n)_{n\geq 1}$, $(V_n)_{n\geq 1}$ and a sequence of nonnegative real numbers $(w_n)_{n\geq 1}$ such that
\begin{description}
 \item[(i)] the word $U_n V_n ^{w_n}$ is the prefix of ${\bf a}$ for all $n\geq 1$,
 \item[(ii)] $|U_n V_n ^{w_n}|/|U_n V_n|\geq \rho $ for all $n\geq 1$,
 \item[(iii)] the sequence $(|V_n ^{w_n}|)_{n\geq 1}$ is strictly increasing.
\end{description}
The {\itshape Diophantine exponent} of ${\bf a}$, first introduced in \cite{Adamczewski2}, is defined to be the supremum of a real number $\rho $ such that ${\bf a}$ satisfy Condition $(*)_{\rho }$.
We denote by $\Dio ({\bf a})$ the Diophantine exponent of ${\bf a}$.
It is immediate that
\begin{eqnarray*}
1\leq \Dio ({\bf a})\leq +\infty .
\end{eqnarray*}

We recall known results about Diophantine exponents and complexity function for automatic sequences, primitive morphic sequences, and Strumian sequences.

Adamczewski and Cassaigne \cite{Adamczewski1} estimated the Diophantine exponent of $k$-automatic sequences.

\begin{lem}\label{auto1}
Let $k\geq 2$ be an integer.
Let ${\bf a}$ be a non-ultimately periodic and $k$-automatic sequence.
Let $m$ be a cardinality of the $k$-kernel of ${\bf a}$.
Then we have
\begin{eqnarray*}
\Dio ({\bf a})<k^m.
\end{eqnarray*}
\end{lem}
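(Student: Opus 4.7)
The plan is to prove the contrapositive: assuming $\mathbf{a}$ satisfies Condition $(*)_\rho$ for some $\rho \geq k^m$, show that $\mathbf{a}$ must be ultimately periodic, contradicting the hypothesis and yielding $\Dio(\mathbf{a}) < k^m$.

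So I would fix witnesses $(U_n)_{n \geq 1}$, $(V_n)_{n \geq 1}$, $(w_n)_{n \geq 1}$ with $|U_n V_n^{w_n}|/|U_n V_n| \geq k^m$ for every $n$. This translates directly into the statement that from position $|U_n|$ onwards, the sequence $\mathbf{a}$ is periodic with period $|V_n|$ on an interval of length at least $(k^m - 1)|U_n V_n|$; equivalently, the ratio (length of periodic block)/(period) is at least $k^m - 1$. Replacing $n$ by a subsequence along which $|V_n|$ tends to infinity, these periodic blocks grow without bound.

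The core of the argument is a decimation/pigeonhole scheme exploiting finiteness of the $k$-kernel. For each $i \geq 0$ and each residue $r \in \{0, 1, \ldots, k^i - 1\}$, the sequence $\mathbf{a}^{(i,r)} := (a_{k^i \ell + r})_{\ell \geq 0}$ belongs to the $k$-kernel of $\mathbf{a}$, so takes at most $m$ distinct values as $(i, r)$ varies. A periodic block in $\mathbf{a}$ of period $p$ and length $L$ produces, after decimation by $k^i$, a periodic block in $\mathbf{a}^{(i,r)}$ of period at most $p/\gcd(p, k^i)$ (divided further by appropriate alignment factors) and length at least $\lfloor L/k^i \rfloor$. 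Choosing $i = 0, 1, \ldots, m$ consecutively and applying the pigeonhole principle, two of the $m+1$ decimated kernel sequences must coincide as elements of the $k$-kernel; comparing the positions of their inherited periodic blocks forces a non-trivial shift identity that either exhibits a strictly shorter period of $\mathbf{a}$ on the same long interval or immediately yields ultimate periodicity of some kernel sequence. Lifting the latter back through $\mathbf{a} = \tau(\delta(q_0, W_n))$ gives ultimate periodicity of $\mathbf{a}$ itself.

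The main obstacle I anticipate is the precise bookkeeping of the period and length of the periodic block under iterated decimation, especially when $\gcd(|V_n|, k)$ is non-trivial: one must ensure that after $m$ decimations enough periodic structure survives to trigger the pigeonhole collapse. The constant $k^m$ is precisely calibrated so that each decimation may lose at most a factor of $k$ in the length-to-period ratio, and the initial budget of $k^m$ in Condition $(*)_{k^m}$ is exactly what guarantees a surviving repetition after $m$ steps, where $m$ is the number of available kernel sequences.
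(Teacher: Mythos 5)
The paper itself gives no proof of this lemma --- it is quoted from Adamczewski and Cassaigne \cite{Adamczewski1} --- so your sketch can only be judged on its own merits, and its decisive step is missing. After the pigeonhole produces two equal decimated kernel sequences $\mathbf{a}^{(i,r_i)}=\mathbf{a}^{(j,r_j)}$, you never actually extract a conclusion: the claimed dichotomy (``a strictly shorter period of $\mathbf{a}$ on the same long interval'' or ``ultimate periodicity of some kernel sequence'') is asserted, not derived, and its second branch rests on a false principle. Ultimate periodicity of \emph{one} element of the $k$-kernel does not lift back to $\mathbf{a}$: the Baum--Sweet sequence satisfies $b_{4n+2}=0$ for all $n$, so its $2$-kernel contains the zero sequence, yet the sequence is not ultimately periodic. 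What would lift is ultimate periodicity (with common parameters) of \emph{all} the sequences $(a_{k^in+j})_{n\geq 0}$, $0\leq j<k^i$, for a single $i$, and nothing in your argument produces that. The first branch is equally unsupported: you do not explain how equality of two kernel words carrying periodic blocks of different periods, lengths and starting positions forces a shorter period of $\mathbf{a}$ itself, nor how the resulting descent would terminate while preserving the ratio hypothesis.

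The quantitative and logical bookkeeping is also off. Losing ``at most a factor $k$ per decimation in the length-to-period ratio'' ignores additive losses: one period step of the decimated sequence costs up to $k/\gcd(|V_n|,k^i)$ period steps of $\mathbf{a}$ inside the block, and the preperiod $|U_n|$ need not shrink relative to the period; with an initial budget of exactly $k^m$ a straightforward count leaves essentially nothing usable after $m$ decimations, so the claim that $k^m$ is ``exactly calibrated'' for your scheme is unsubstantiated --- the argument in \cite{Adamczewski1} has to track $|U_nV_n|$ against $|U_nV_n^{w_n}|$ much more carefully. Moreover, your contrapositive only yields $\Dio(\mathbf{a})\leq k^m$: ruling out Condition $(*)_\rho$ for every $\rho\geq k^m$ does not exclude $\Dio(\mathbf{a})=k^m$, since the supremum in the definition need not be attained, so the strict inequality of the lemma is not reached. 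Finally, a minor slip: condition (iii) gives $|V_n^{w_n}|\to\infty$, not a subsequence with $|V_n|\to\infty$, which you invoke at the start.
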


Moss\'e's result \cite{Mosse} implies the following lemma.

\begin{lem}\label{pri1}
Let ${\bf a}$ be a non-ultimately periodic and primitive morphic sequence.
Then the Diophantine exponent of ${\bf a}$ is finite.
\end{lem}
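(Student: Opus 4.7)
The plan is to argue by contradiction, leveraging Moss\'e's recognizability theorem for primitive substitutions. Write ${\bf a}=\tau ({\bf b})$, where ${\bf b}=\lim _{n\to \infty }\sigma ^{n}(a)$ is the fixed point of the primitive morphism $\sigma :\mathcal{A}^{*}\to \mathcal{A}^{*}$ starting with the prolongable letter $a$. I would first reduce the problem to bounding $\Dio ({\bf b})$: since $\tau $ acts letter-by-letter, any prefix of ${\bf a}$ of the form $UV^{w}$ pulls back to a prefix of ${\bf b}$ of the same length which is weakly periodic modulo the fibres of $\tau $, and the uniform recurrence of ${\bf b}$ (a consequence of primitivity of $\sigma $) upgrades this to a genuine near-power prefix of ${\bf b}$ with parameters comparable up to multiplicative constants depending only on $\sigma $ and $\tau $. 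It therefore suffices to show $\Dio ({\bf b})<+\infty $.

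Suppose for contradiction that $\Dio ({\bf b})=+\infty $, with witnesses $(U_{n},V_{n},w_{n})$ such that each $U_{n}V_{n}^{w_{n}}$ is a prefix of ${\bf b}$, $|V_{n}^{w_{n}}|\to +\infty $, and $\rho _{n}:=|U_{n}V_{n}^{w_{n}}|/|U_{n}V_{n}|\to +\infty $. Replacing $\sigma $ by a suitably high power (which preserves primitivity, prolongability, and the fixed point), one may assume $\kappa :=\min _{c\in \mathcal{A}}|\sigma (c)|\geq 2$; set $K:=\max _{c\in \mathcal{A}}|\sigma (c)|$. Moss\'e's recognizability theorem then furnishes a constant $L$ such that every factor of ${\bf b}$ of length at least $L$ admits a unique $\sigma $-desubstitution up to a boundary correction of length less than $K$.

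The heart of the proof is an iterated desubstitution. Whenever $|V_{n}|\geq L$, Moss\'e's theorem produces a shorter prefix $U_{n}'(V_{n}')^{w_{n}'}$ of ${\bf b}$ whose image under $\sigma $ agrees with $U_{n}V_{n}^{w_{n}}$ up to bounded boundary error, with $|V_{n}'|\leq |V_{n}|/\kappa +O(1)$ and new ratio at least $\rho _{n}-C$ for a constant $C=C(\sigma )$. Iterating $j_{n}$ times, with $j_{n}$ chosen maximal such that $|V_{n}|/\kappa ^{j_{n}}\geq L$, produces a prefix of ${\bf b}$ of period at most $M:=LK$ and repetition ratio at least $\rho _{n}-Cj_{n}=\rho _{n}-O(\log |V_{n}|)$. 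Among these prefixes, infinitely many share a common period $p\leq M$ by pigeonhole, and since their repetition ratios still tend to $+\infty $, the relation ${\bf b}_{i}={\bf b}_{i+p}$ propagates to all $i$ and forces ${\bf b}$ to be periodic, contradicting the hypothesis; choosing the witnesses so that $\rho _{n}$ grows faster than $\log |V_{n}|$ makes the contradiction quantitative.

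The main obstacle is controlling the boundary errors at each desubstitution step so that the repetition ratio $\rho _{n}$ degrades only by an additive $O(1)$ per iteration rather than multiplicatively; equally delicate is the reduction to the pure morphic case, because the coding $\tau $ may identify distinct letters and thereby inflate the Diophantine exponent a priori.
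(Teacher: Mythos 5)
Your overall strategy (recognizability plus iterated desubstitution) is in the spirit of what Mossé actually proves, but as written the argument has genuine gaps; note that the paper itself gives no argument at all here — it simply invokes Mossé's theorem that a non-ultimately periodic primitive morphic sequence has finite index (bounded powers of factors), from which $\Dio ({\bf a})$ is at most that index, since the ratio $|UV^{w}|/|UV|$ of any prefix never exceeds the index of $V$.

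Three concrete problems. First, your reduction from ${\bf a}=\tau ({\bf b})$ to ${\bf b}$ does not work as stated: a prefix $UV^{w}$ of ${\bf a}$ only lifts to a prefix of ${\bf b}$ that is periodic \emph{modulo the fibres of} $\tau $, and uniform recurrence of ${\bf b}$ does not upgrade this to a genuine near-power of ${\bf b}$ with comparable parameters — a coding can create repetitions out of nothing (collapsing two letters can turn a highly aperiodic block of ${\bf b}$ into a constant block of ${\bf a}$). The standard correct route is different: either observe that the recurrence function does not increase under codings, so ${\bf a}$ is linearly recurrent, and use the fact that an aperiodic linearly recurrent sequence has bounded index, or apply the Mossé-type power bound to ${\bf a}$ itself as a primitive substitutive sequence. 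Second, the claim that each desubstitution step costs only an additive constant in the ratio is asserted, not proved, and is doubtful without first normalizing the witnesses so that $|U_{n}|=O(|V_{n}|)$: since $\sigma $ distorts lengths by letter-dependent factors anywhere between $\kappa $ and $K$, when $|U_{n}|\gg |V_{n}|$ one step can shrink the ratio by a multiplicative factor of order $\kappa /K$. Third, and most serious logically: even granting the additive loss, after $j_{n}\asymp \log |V_{n}|$ steps you retain only $\rho _{n}-O(\log |V_{n}|)$, and you then say one should ``choose the witnesses so that $\rho _{n}$ grows faster than $\log |V_{n}|$''. But $\Dio ({\bf b})=+\infty $ only guarantees witnesses with $\rho _{n}\to \infty $; it gives no control of $|V_{n}|$ in terms of $\rho _{n}$, so such a choice may be impossible and the contradiction is not reached. (A smaller point: the final step ``${\bf b}_{i}={\bf b}_{i+p}$ propagates to all $i$'' is not automatic from long periodic prefixes at varying starting positions; it needs minimality of the subshift, i.e.\ that arbitrarily long $p$-periodic factors in a uniformly recurrent aperiodic sequence are impossible — this is fixable, but should be said.)
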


Adamczewski and Bugeaud \cite{Adamczewski5} established a relation between Strumian sequences and Diophantine exponents.

\begin{lem}\label{Strumian1}
Let ${\bf a}$ be a Strumian sequence with slope $\theta $.
Then the continued fraction expansion of $\theta $ has bounded partial quotients if and only if the Diophantine exponent of ${\bf a}$ is finite.
\end{lem}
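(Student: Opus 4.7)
The plan is to reduce the problem to the characteristic Sturmian case and then exploit the standard word decomposition. Write $\theta=[0;a_1,a_2,\ldots]$ with convergents $p_n/q_n$, and define the standard words by $s_{-1}=1$, $s_0=0$, $s_n=s_{n-1}^{a_n}s_{n-2}$ for $n\geq 1$; then $|s_n|=q_n$ and the characteristic Sturmian word $c_\theta$ of slope $\theta$ is $\lim_{n\to\infty}s_n$. Since the coding $\tau$ with $\tau(0)\neq\tau(1)$ in the definition of a Sturmian sequence acts bijectively on letters, it preserves the Diophantine exponent. Moreover, Sturmian sequences of the same slope share the same set of factors, and by minimality of the underlying rotation each factor $V^w$ occurring anywhere in such a sequence occurs within a bounded multiple of $|V^w|$ from the beginning; hence for deciding finiteness of $\Dio$ it is enough to treat $c_\theta$.

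For the direction \emph{unbounded partial quotients imply $\Dio({\bf a})=+\infty$}: by the recursion, every prefix $s_n$ of $c_\theta$ begins with $s_{n-1}^{a_n}$. Taking $U_n$ empty, $V_n=s_{n-1}$ and $w_n=a_n$, Condition $(*)_{\rho}$ holds with $\rho=a_n$, and $|V_n^{w_n}|=a_nq_{n-1}$ is strictly increasing along the subsequence of $n$ realizing $\limsup a_n=+\infty$. Therefore $\Dio(c_\theta)\geq\limsup_n a_n=+\infty$.

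For the direction \emph{bounded partial quotients imply $\Dio({\bf a})<+\infty$}: assume $a_n\leq M$ for all $n$. The task is to bound $|UV^w|/|UV|$ uniformly over all prefix factorizations $UV^w$ of $c_\theta$. The approach is to encode the length of any prefix in its Ostrowski $\theta$-representation (whose digits lie in $\{0,\ldots,M\}$), decompose the prefix as a concatenation of standard words $s_i$ and their suffix-conjugates, and use the recursion to show that any period $|V|$ realizing exponent $w>3$ must satisfy $|V|\asymp q_n$ for some $n$, with $w\leq a_{n+1}+O(1)\leq M+O(1)$. Combining this with the trivial bound $|U|\geq 0$ gives $\Dio(c_\theta)\leq M+C$ for an absolute constant $C$.

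The main obstacle is the second direction, and specifically the structural claim that long repeated prefixes of $c_\theta$ arise only from standard words with period $q_n$ and exponent controlled by $a_{n+1}$. This is a quantitative version of the classical fact (Mignosi, Berstel, Justin--Pirillo) that the critical exponent of a Sturmian word is finite precisely when the slope has bounded partial quotients; the passage from the critical exponent (which bounds $|V^w|/|V|$) to the Diophantine exponent (which bounds $|UV^w|/|UV|$) requires the additional step of controlling the preperiod $|U|$ in terms of $|V|$, which is handled by the Ostrowski decomposition.
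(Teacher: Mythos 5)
The paper gives no proof of this lemma at all---it is quoted from Adamczewski--Bugeaud \cite{Adamczewski5}---so your sketch has to be measured against what a correct argument must contain, and there is a genuine gap in the direction that carries all the content: unbounded partial quotients imply $\Dio ({\bf a})=+\infty $. The Diophantine exponent is defined through prefixes, hence depends a priori on the intercept, and your justification for reducing to the characteristic word $c_{\theta }$---``by minimality, every factor $V^{w}$ occurs within a bounded multiple of $|V^{w}|$ from the beginning''---is precisely linear recurrence, which for Sturmian words holds \emph{if and only if} the slope has bounded partial quotients (the recurrence function of a Sturmian word of slope $\theta $ is of size $n+q_{k}+q_{k+1}$ for $q_{k}\leq n<q_{k+1}$, so it is not $O(n)$ when the partial quotients are unbounded). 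Thus the claim fails exactly in the case where you invoke it. Worse, even if a long power did occur within distance $C|V^{w}|$ of the origin, the resulting prefix $UV^{w}$ would only satisfy $|UV^{w}|/|UV|\leq 1+1/C$ up to negligible terms, a bounded quantity; occurrences of big powers force $\Dio $ to blow up only when their starting position is small compared with their length, and controlling that requires the intercept. This is what \cite{Adamczewski5} actually does, via the Ostrowski representation of the intercept; equivalently one can argue with the first time $t_{n}$ at which the rotation orbit of $\rho $ enters the interval of length $\| q_{n}\theta \| $ that breaks period $q_{n}$: if $t_{n}\geq \sqrt{q_{n}q_{n+1}}$ the word begins with a $q_{n}$-periodic block of exponent at least of order $\sqrt{a_{n+1}}$ (with $U$ empty), while if $t_{n}<\sqrt{q_{n}q_{n+1}}$ the next entrance occurs roughly $q_{n+1}$ letters later, and taking $|U|$ about $t_{n}$ and a $q_{n}$-periodic block of length about $q_{n+1}$ again gives a ratio of order $\sqrt{a_{n+1}}$. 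Some argument of this kind, valid for every intercept, is indispensable; as written, your proof establishes the unbounded direction only for $c_{\theta }$.

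For the converse direction your plan is both incomplete and heavier than necessary. The structural claim you rely on (every long repeated prefix has period comparable to some $q_{n}$ and exponent at most $a_{n+1}+O(1)$) is exactly what would need proof and is only asserted; but in fact no control of the preperiod is needed. If $UV^{w}$ is a prefix of ${\bf a}$ with $|V^{w}|\geq |V|$, then $V^{w}$ is a factor of ${\bf a}$ and $|UV^{w}|/|UV|=(|U|+|V^{w}|)/(|U|+|V|)\leq |V^{w}|/|V|$, so $\Dio ({\bf a})\leq \max \bigl( 1, \sup |V^{w}|/|V|\bigr) $, the supremum running over all factor powers of ${\bf a}$. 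This critical exponent depends only on the slope (Sturmian words of the same slope have the same factors), and by Mignosi's theorem---the very result you cite---it is finite exactly when the slope has bounded partial quotients. So the bounded-quotient half follows in two lines from what you quote, and the genuine gap in your proposal is the unbounded half for an arbitrary intercept.
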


It is known that automatic sequences, primitive morphic sequences, and Sturmian sequences have low complexity.

\begin{lem}\label{auto2}
Let $k\geq 2$ be an integer and ${\bf a}$ be a $k$-automatic sequence.
Let $d$ be a cardinality of the internal alphabet associated with ${\bf a}$.
Then we have for all $n\geq 1$
\begin{eqnarray*}
p({\bf a}, n) \leq k d^2 n.
\end{eqnarray*}
\end{lem}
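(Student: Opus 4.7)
My plan is to pass from the $k$-automatic setting to the $k$-uniform morphic one via the previous lemma (Cobham's theorem), and then to control the complexity of the resulting fixed point by a block-decomposition argument. Concretely, I would invoke Cobham's characterization to write ${\bf a}=\tau({\bf b})$, where ${\bf b}=\lim_{n\to\infty}\sigma^n(a)$ is the infinite fixed point of a $k$-uniform morphism $\sigma:\mathcal{A}^{*}\to\mathcal{A}^{*}$ on the initial alphabet $\mathcal{A}$ with $|\mathcal{A}|=d$, and $\tau:\mathcal{A}^{*}\to\mathcal{B}^{*}$ is a coding. Since a coding is $1$-uniform, it does not increase the complexity function, so $p({\bf a},n)\leq p({\bf b},n)$, and the problem reduces to bounding $p({\bf b},n)$.

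For the main estimate, fix $n\geq 1$ and let $j\geq 0$ be the smallest integer with $k^j\geq n$. Because ${\bf b}$ is a fixed point of $\sigma$, it is equally a fixed point of the $k^j$-uniform morphism $\sigma^j$, which yields the block decomposition
\begin{eqnarray*}
{\bf b}=\sigma^j(b_0)\,\sigma^j(b_1)\,\sigma^j(b_2)\cdots ,
\end{eqnarray*}
with each block $\sigma^j(b_i)$ of length exactly $k^j$. Any factor of ${\bf b}$ of length $n$ starts at some position $p=Qk^j+R$ with $0\leq R<k^j$; since $R+n<2k^j$, the factor lies entirely inside the concatenation $\sigma^j(b_Q)\sigma^j(b_{Q+1})$, beginning at position $R$. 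Thus each length-$n$ factor of ${\bf b}$ is determined by the pair $(b_Q,b_{Q+1})\in\mathcal{A}^2$ together with the offset $R\in\{0,1,\ldots,k^j-1\}$, and consequently
\begin{eqnarray*}
p({\bf b},n)\leq d^2\,k^j\leq kd^2 n,
\end{eqnarray*}
where the last inequality uses the minimality of $j$: for $n\geq 2$ one has $k^{j-1}\leq n-1$, hence $k^j\leq k(n-1)<kn$, while the case $n=1$ is trivial.

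I do not anticipate a serious obstacle here; the argument is essentially a counting exercise once Cobham's theorem has been applied. The only point requiring genuine care is the choice of the exponent $j$, which must be large enough that every factor of length $n$ fits across at most two consecutive $\sigma^j$-blocks, but small enough to keep the counting bound $d^2 k^j$ linear in $n$. Taking $j$ minimal with $k^j\geq n$ gives precisely the required balance.
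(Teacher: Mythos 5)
Your proof is correct, and it fills in exactly the standard argument: the paper itself offers no proof of this lemma, simply citing \cite[Theorem 10.3.1]{Allouche} (or Cobham), and the block-decomposition via Cobham's theorem that you give — reduce to the fixed point ${\bf b}$ of the $k$-uniform morphism, note a coding cannot increase complexity, and count factors of length $n$ inside $\sigma^j(b_Q b_{Q+1})$ with $j$ minimal such that $k^j\geq n$, giving $p({\bf b},n)\leq d^2k^j\leq kd^2n$ — is precisely the argument behind that citation. No gaps: the choice of minimal $j$, the estimate $R+n<2k^j$, and the separate remark for $n=1$ are all handled correctly.
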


\begin{proof}
See \cite[Theorem 10.3.1]{Allouche} or \cite{Cobham}.
\end{proof}

\begin{lem}\label{pri2}
Let ${\bf a}$ be a primitive morphic sequence over a finite set of cardinality of $b\geq 2$.
Let $v$ be the width of a primitive morphism $\sigma $ which generates the sequence ${\bf a}$.
Then we have for all $n\geq 1$
\begin{eqnarray*}
p({\bf a},n)\leq 2 v^{4 b-2} b^3 n.
\end{eqnarray*}
\end{lem}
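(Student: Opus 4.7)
The strategy is to bound the factor complexity of ${\bf a}$ by that of the purely morphic sequence ${\bf u}=\sigma^{\infty}(a)$, and then to exploit the primitivity of $\sigma$ to cover every factor of ${\bf u}$ of length $n$ by an appropriate iterate $\sigma^{k}$ applied to a bounded number of short factors.

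\emph{Reduction to the morphic sequence.} Since $\tau$ is $1$-uniform, every factor of ${\bf a}$ of length $n$ is obtained by applying $\tau$ letter by letter to a factor of ${\bf u}$ of length $n$, so $p({\bf a},n)\le p({\bf u},n)$. It therefore suffices to bound $p({\bf u},n)$.

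\emph{Covering by $\sigma^{k}$-images.} For each $n\ge 1$, I would choose $k=k(n)$ minimal with $\min_{x\in\mathcal{A}}|\sigma^{k}(x)|\ge n$. Since ${\bf u}=\sigma^{k}({\bf u})$, any factor of ${\bf u}$ of length $n$ is a factor of $\sigma^{k}(cd)$ for some factor $cd$ of ${\bf u}$ of length $2$. The number of such $cd$ is at most $p({\bf u},2)\le b^{2}$, and each $\sigma^{k}(cd)$ contains at most $|\sigma^{k}(cd)|-n+1\le 2v^{k}$ windows of length $n$, whence
\[
p({\bf u},n)\le 2b^{2}v^{k}.
\]

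\emph{Primitivity and the bound on $v^{k}$.} By primitivity of $\sigma$, the classical Wielandt estimate for the index of primitivity of its incidence matrix gives an integer $N_{0}$, bounded explicitly in terms of $b$, such that every letter of $\mathcal{A}$ occurs in $\sigma^{N_{0}}(c)$ for every $c\in\mathcal{A}$. Iterating the resulting inequality
\[
\min_{x}|\sigma^{k+N_{0}}(x)|\ge\max_{y}|\sigma^{k}(y)|
\]
shows that $\min_{x}|\sigma^{k}(x)|$ grows by at least a factor of $v$ every $N_{0}+1$ steps. Combining this with the minimality of $k$ would lead to an explicit estimate of the shape $v^{k}\le v^{4b-2}n/b$, which upon insertion into the previous display gives
\[
p({\bf u},n)\le 2v^{4b-2}b^{3}n,
\]
and hence the claimed bound for ${\bf a}$.

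\emph{Main obstacle.} The crux of the argument is the step converting the covering bound $p({\bf u},n)\le 2b^{2}v^{k}$ into a linear function of $n$. The Perron eigenvalue $\lambda$ of the incidence matrix of $\sigma$ may be strictly smaller than the width $v$, so $v^{k}$ can a priori exceed $n\sim\min_{x}|\sigma^{k}(x)|\sim\lambda^{k}$ by a superlinear amount. Trading $v$'s against iterations of $\sigma$ via Wielandt's inequality is what produces the explicit exponent $4b-2$, and the bookkeeping that makes the constants line up to exactly $2v^{4b-2}b^{3}$ is the most delicate part of the argument; the reduction to ${\bf u}$, the covering step, and the final counting are routine.
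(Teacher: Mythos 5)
For this lemma the paper offers no argument of its own: it simply cites Theorem 10.4.12 of Allouche and Shallit, so what matters is whether your sketch can actually be completed. Your reduction to the purely morphic word ${\bf u}$ and the covering of every length-$n$ factor by $\sigma^{k}(cd)$, with $cd$ ranging over the at most $b^{2}$ factors of length $2$, are exactly how the standard proof starts. The genuine gap is the step you yourself flag as the "main obstacle": the passage from $p({\bf u},n)\le 2b^{2}v^{k}$ to a bound linear in $n$. The estimate you hope for, $v^{k}\le v^{4b-2}n/b$, is false in general, and no bookkeeping can save it. Your Wielandt-type mechanism only shows that $\min_{x}|\sigma^{k}(x)|$ gains a factor $v$ every $N_{0}+1$ iterations, i.e. $\min_{x}|\sigma^{k}(x)|\ge v^{\lfloor k/(N_{0}+1)\rfloor}$, and together with the minimality of $k$ this yields only $v^{k}\le C\,n^{N_{0}+1}$, a polynomial bound. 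Concretely, take $b=2$ and $\sigma(0)=01^{v-1}$, $\sigma(1)=0$: this is primitive and prolongable on $0$, has width $v$, but its Perron eigenvalue is $\lambda=(1+\sqrt{4v-3})/2\approx\sqrt{v}$, so $n\asymp\min_{x}|\sigma^{k}(x)|\asymp\lambda^{k}$ and $v^{k}\asymp n^{\log v/\log\lambda}$ with exponent $>1$; thus $v^{k}$ eventually exceeds $v^{4b-2}n/b$, and your chain of inequalities cannot close.

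The repair is to count windows inside $\sigma^{k}(cd)$ by its true length, not by $2v^{k}$: this gives $p({\bf u},n)\le 2b^{2}\max_{y}|\sigma^{k}(y)|$, and primitivity is then used to compare maximal and minimal image lengths \emph{at the same level} rather than to compare $v^{k}$ with $n$. If $m$ is such that every letter occurs in $\sigma^{m}(c)$ for all $c$, then $\max_{z}|\sigma^{k-1}(z)|\le v^{m}\max_{w}|\sigma^{k-1-m}(w)|\le v^{m}\min_{x}|\sigma^{k-1}(x)|$, hence $\max_{y}|\sigma^{k}(y)|\le v\max_{z}|\sigma^{k-1}(z)|\le v^{m+1}\min_{x}|\sigma^{k-1}(x)|<v^{m+1}n$ by the minimality of $k$, and so $p({\bf a},n)\le 2b^{2}v^{m+1}n$. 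Since $\sigma$ is prolongable on some letter, its incidence digraph has a loop, and the exponent of a primitive digraph on $b$ vertices with a loop is at most $2b-2$; thus $m+1\le 2b-1$ and $2b^{2}v^{m+1}\le 2v^{4b-2}b^{3}$, recovering the stated constant. Note also that quoting Wielandt's bound $m\le(b-1)^{2}+1$, as you do, would not even fit under the exponent $4b-2$ for larger $b$, so the loop structure coming from prolongability is genuinely needed if one wants the constant of the lemma by this route.
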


\begin{proof}
See \cite[Theorem 10.4.12]{Allouche}.
\end{proof}

\begin{lem}\label{Strumian2}
Let ${\bf a}$ be a Sturmian sequence.
Then we have for all $n\geq 1$
\begin{eqnarray*}
p({\bf a}, n)=n+1.
\end{eqnarray*}
\end{lem}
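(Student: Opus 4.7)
The plan is to prove the two inequalities $p(\mathbf{a},n)\geq n+1$ and $p(\mathbf{a},n)\leq n+1$ separately, after first reducing to the underlying binary Sturmian sequence. Since $\tau$ is a coding with $\tau(0)\neq \tau(1)$, it is injective on letters, and applying it letter by letter does not change the number of distinct factors of any given length; hence it suffices to establish the equality for the binary sequences $\mathbf{s}_{\theta,\rho}=(s_{n,\theta,\rho})_{n\geq 1}$ and $\mathbf{s}_{\theta,\rho}'=(s_{n,\theta,\rho}')_{n\geq 1}$.

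For the lower bound, I would invoke the Morse--Hedlund theorem, which asserts that a sequence is ultimately periodic if and only if $p(\mathbf{a},n)\leq n$ for some $n\geq 1$. It remains to check that $\mathbf{s}_{\theta,\rho}$ and $\mathbf{s}_{\theta,\rho}'$ are not ultimately periodic, which is immediate from the irrationality of $\theta$: the orbit $(\{n\theta+\rho\})_{n\geq 1}$ is dense in $[0,1)$, which is incompatible with periodicity of the associated symbolic coding.

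For the upper bound, I would use the standard dynamical interpretation of Sturmian sequences. A direct computation shows that $s_{k,\theta,\rho}=1$ precisely when $\{k\theta+\rho\}\in[1-\theta,1)$, so a factor of length $n$ starting at index $k$ is determined solely by the cell of the partition of $[0,1)$ induced by the $n$ points $\{-j\theta\}$, $1\leq j\leq n$, that contains the point $\{k\theta+\rho\}$. Since $\theta$ is irrational, these $n$ cut points are pairwise distinct and all nonzero, so they divide $[0,1)$ into exactly $n+1$ intervals, giving at most $n+1$ distinct factors of length $n$. The argument for $\mathbf{s}_{\theta,\rho}'$ is identical after exchanging the half-open conventions.

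The main technical obstacle is not conceptual but rather the careful bookkeeping of boundary conventions, so that both the floor- and ceiling-based definitions yield complexity exactly $n+1$; this reduces, as above, to the irrationality of $\theta$, which guarantees that none of the $n+1$ intervals collapses. The result is classical and a complete proof can be found in \cite{Allouche}.
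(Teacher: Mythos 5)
Your proposal is correct, but it does more than the paper, whose entire proof of this lemma is the one-line citation ``See \cite[Theorem 10.5.8]{Allouche}''; what you have written out is essentially the standard proof lying behind that citation. The three ingredients are all sound: the reduction to the binary word is valid because a coding with $\tau (0)\not= \tau (1)$ is injective on letters and hence induces a length-preserving bijection between factors of $\tau ({\bf s})$ and factors of ${\bf s}$; the upper bound via the rotation picture is right, since $s_{k,\theta ,\rho }=1$ exactly when $\{ k\theta +\rho \} \in [1-\theta ,1)$, so the length-$n$ factor starting at $k$ depends only on the cell of the partition of $[0,1)$ cut by the $n$ distinct nonzero points $\{ -j\theta \}$, $1\leq j\leq n$; and Morse--Hedlund gives the matching lower bound once non-ultimate-periodicity is known. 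The only step I would tighten is that last one: ``density of the orbit is incompatible with periodicity'' is a bit loose as stated, and the cleanest way to make it precise is the frequency argument --- by equidistribution the letter $1$ occurs in ${\bf s}_{\theta ,\rho }$ (and in ${\bf s}_{\theta ,\rho }'$) with frequency $\theta $, which is irrational, whereas any ultimately periodic sequence has rational letter frequencies. With that patch (and the routine bookkeeping for the ceiling variant, which you already flag), your argument is a complete and correct proof, whereas the paper simply outsources it to the textbook.
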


\begin{proof}
See \cite[Theorem 10.5.8]{Allouche}.
\end{proof}

\begin{thm}\label{main1}
Let ${\bf a}=(a_n)_{n\geq 0}$ be a non-ultimately periodic sequence over $P$.
Set $\xi :=\sum_{n=0}^{\infty}a_n p^n \in \lQ _p$.
Assume that there exist integers $n_0\geq 1$ and $\kappa \geq 2$ such that for all $n\geq n_0$,
\begin{eqnarray*}
p({\bf a}, n) \leq \kappa n.
\end{eqnarray*}
Then the $p$-adic number $\xi $ is an $S$-, $T$-, or $U_1$-number.
\end{thm}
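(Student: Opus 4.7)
The goal is to prove that $w_n(\xi) < \infty$ for every integer $n \geq 2$; combined with the transcendence of $\xi$ furnished by Theorem~\ref{AB}, this will force $\xi$ into the union $S\cup T\cup U_1$. I would argue by contradiction and suppose $\xi$ is a $U_k$-number for some $k \geq 2$. Then there exists an infinite sequence of pairwise non-proportional integer polynomials $P^{(j)}(X)$ of degree at most $k$, with heights $H_j := H(P^{(j)}) \to \infty$, satisfying
\[
0 < |P^{(j)}(\xi)|_p \leq H_j^{-w_j - 1}, \qquad w_j \to \infty.
\]
The aim is to derive a contradiction with the $p$-adic Subspace Theorem in the Evertse--Schlickewei form exposed in Chapter~9 of \cite{Bugeaud1}.

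First, I would use the hypothesis $p({\bf a}, n) \leq \kappa n$ together with non-ultimate periodicity to produce an infinite family of prefix decompositions ${\bf a} = U_N V_N^{1+\eta_N} \cdots$ with $|U_N V_N| \to \infty$ and $\eta_N \geq \eta_0$ for some positive constant $\eta_0 = \eta_0(\kappa)$. This is the classical pigeonhole on factors: among the at most $\kappa m$ distinct factors of length $m$ in a sufficiently long prefix of ${\bf a}$, two must occur at nearby positions, and non-ultimate periodicity prevents the repetition from being trivial. Setting
\[
\beta_N := \sum_{j=0}^{|U_N|-1} a_j p^j + \frac{p^{|U_N|}}{1 - p^{|V_N|}} \sum_{j=0}^{|V_N|-1} a_{|U_N|+j} p^j \in \lQ ,
\]
a direct computation shows $H(\beta_N) \ll p^{|U_N V_N|}$ and $|\xi - \beta_N|_p \leq p^{-|U_N V_N^{1+\eta_N}|}$, so $|\xi - \beta_N|_p \leq H(\beta_N)^{-1-\eta_0/2}$ for all sufficiently large $N$.

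I would then apply the $p$-adic Subspace Theorem to the vector ${\bf x} = (1, \xi, \ldots, \xi^k)$. For each large $j$, I would choose $k$ rational approximations $\beta_{N_1(j)}, \ldots, \beta_{N_k(j)}$ with heights placed on carefully selected geometric scales relative to $H_j$, and combine them with $P^{(j)}$ to form $k+1$ linearly independent linear forms in $k+1$ variables. The product of the archimedean and $p$-adic values of these forms evaluated at ${\bf x}$ is bounded above by a power of $H_j \cdot \prod_i H(\beta_{N_i(j)})$ with exponent strictly less than $-1$, which triggers the Subspace Theorem: the vector ${\bf x}$ must lie in one of finitely many proper rational subspaces of $\lQ ^{k+1}$. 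Such an inclusion yields a non-trivial polynomial relation of degree at most $k$ satisfied by $\xi$, contradicting Theorem~\ref{AB}.

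The main obstacle is the calibration of the heights $H(\beta_{N_i(j)})$ relative to $H_j$: the $k+1$ linear forms must be verifiably independent over $\lQ $ and the product of their values must be small enough to activate the Subspace Theorem with the correct exponent. This bookkeeping parallels the real-variable treatment in \cite{Adamczewski5}, with the archimedean factor $|\xi - \beta_N|$ replaced by the ultrametric $|\xi - \beta_N|_p$ and the rational heights still measured in the archimedean sense; the non-proportionality of the $P^{(j)}$ and the $\eta_0$-quantitative strength of the $\beta_N$ approximations are what will allow me to extract, after restricting to a suitable subsequence of $j$, a single proper rational subspace containing ${\bf x}$.
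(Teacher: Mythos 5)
The central step of your plan fails: you propose to apply the $p$-adic Subspace Theorem ``to the vector ${\bf x}=(1,\xi ,\ldots ,\xi ^k)$'' and to conclude that ${\bf x}$ lies in one of finitely many proper rational subspaces of $\lQ ^{k+1}$. Every form of the Subspace Theorem (qualitative or the quantitative Evertse--Schlickewei/Evertse--Ferretti versions) is a statement about the \emph{integer} solution vectors of a system of inequalities whose linear forms have \emph{algebraic} coefficients; it says nothing about a point with transcendental coordinates, and indeed $(1,\xi ,\ldots ,\xi ^k)$ lies in a proper rational subspace if and only if $\xi$ is algebraic of degree at most $k$ --- precisely what Theorem \ref{AB} excludes. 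So the ``contradiction'' you aim for cannot be generated this way: the roles of the forms and of the points are inverted. In the paper's argument the algebraic approximant $\alpha$ of degree $d\geq 2$ sits in the coefficients of the forms ($L_{3p}=\alpha X-\alpha Y-Z$), and the theorem is applied to the integer vectors ${\bf x}_h=(p^{s_{j+h}},1,p_{j+h})$ manufactured from the repetitions $U_nV_n^{w_n}$; your numbers $\beta _N$ play the role of the rationals $p_{j+h}/(p^{s_{j+h}}-1)$, not of coordinates of a point to be fed to the theorem.

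Even after reorienting the application, qualitative finiteness of subspaces cannot suffice, because the approximants $P^{(j)}$ (equivalently, algebraic numbers $\alpha$ of degree at most $k$) vary and you must produce a bound on $w_k(\xi )$ uniform in them. This is exactly why the paper invokes the quantitative Theorem \ref{subsp}: the number $N$ of subspaces depends only on $d$ and on $\varepsilon =1/(4\kappa +2)$, and --- this is the step entirely absent from your sketch --- Liouville's inequality (Lemma \ref{Liouville inequality}) together with Lemma \ref{hyper} and the doubling $t_{k+1}\geq 2t_k$ shows that any single subspace can contain at most $L$ of the vectors ${\bf x}_h$, whence $M\leq LN$ and the explicit measure $|\xi -\alpha |_p\geq H(\alpha )^{-wc^{LN+1}(2\kappa +1)}$, i.e.\ $w_d^{*}(\xi )\leq \max \bigl( w_1(\xi ),\,wc^{LN+1}(2\kappa +1)\bigr)$ for every $d\geq 2$. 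Your ``single proper rational subspace after passing to a subsequence'' has no counting mechanism of this kind, and without it no finite bound on $w_k(\xi )$ emerges. A smaller slip: your opening goal ``prove $w_n(\xi )<\infty$ for every $n\geq 2$'' is impossible as stated, since $w_n\geq w_1$, so a $U_1$-number (which the theorem allows, and which does occur under the complexity hypothesis) has all $w_n=\infty$; what is actually needed, and what your contradiction hypothesis implicitly encodes, is the conditional statement that finiteness of $w_1(\xi ),\ldots ,w_{k-1}(\xi )$ forces finiteness of $w_k(\xi )$.
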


Theorem \ref{main1} is an analogue of Th\'eor\`eme 1.1 in \cite{Adamczewski5}.
There is a real continued fraction analogue of Theorem \ref{main1} in \cite[Theorem 3.2]{Bugeaud2}.

\begin{thm}\label{main2}
Let ${\bf a}=(a_n)_{n\geq 0}$ be a non-ultimately periodic sequence over $P$.
Set $\xi :=\sum_{n=0}^{\infty}a_n p^n \in \lQ _p$.
Assume that there exist integers $n_0\geq 1$ and $\kappa \geq 2$ such that for all $n\geq n_0$,
\begin{eqnarray*}
p({\bf a}, n) \leq \kappa n.
\end{eqnarray*}
Then the Diophantine exponent of ${\bf a}$ is finite if and only if $\xi $ is not a $U_1$-number.
Furthermore, if the Diophantine exponent of ${\bf a}$ is finite, then we have
\begin{eqnarray}\label{last}
w_1(\xi )\leq 8(\kappa +1)^2(2\kappa +1)\Dio ({\bf a})-1.
\end{eqnarray}
\end{thm}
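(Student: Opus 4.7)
The theorem combines an equivalence, ``$\Dio(\mathbf{a}) < +\infty$ iff $\xi$ is not a $U_1$-number'', with an explicit inequality \eqref{last}. I plan to prove the ``$\Rightarrow$'' direction of the equivalence via the explicit inequality, and handle the converse by direct construction of rational approximants.

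For the easier converse ($\Dio(\mathbf{a}) = +\infty$ forces $\xi$ to be a $U_1$-number), fix $\rho$ strictly below $\Dio(\mathbf{a})$ and take a Condition $(*)_\rho$ witness, namely a prefix $U_n V_n^{w_n}$ of $\mathbf{a}$ meeting (i)--(iii). Let $u_n, v_n \in \lZ$ be the $p$-adic integer values of $U_n, V_n$, and set
\[
\alpha_n := u_n + \frac{v_n\, p^{|U_n|}}{1 - p^{|V_n|}} \in \lQ.
\]
The $p$-adic expansion of $\alpha_n$ is precisely $U_n \overline{V_n}$, so $\alpha_n$ and $\xi$ agree on their first $|U_n V_n^{w_n}|$ digits; hence $|\xi - \alpha_n|_p \leq p^{-|U_n V_n^{w_n}|}$. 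A crude estimate of the numerator and denominator of $\alpha_n$ gives $H(\alpha_n) \leq 3 p^{|U_n V_n|}$, and combining these yields $|\xi - \alpha_n|_p \leq 3^\rho H(\alpha_n)^{-\rho}$, so $w_1^*(\xi) \geq \rho - 1$. Letting $\rho \uparrow \Dio(\mathbf{a}) = +\infty$ shows $w_1^*(\xi) = +\infty$, hence $\xi$ is a $U_1$-number.

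For the hard direction, assume $w_1(\xi) > W$. There is a sequence of rationals $u_n/v_n$ with $H_n := \max(|u_n|, |v_n|) \to +\infty$ and $|\xi - u_n/v_n|_p \leq H_n^{-W-1}$. Since $\xi \in \lZ_p$, we may absorb powers of $p$ into the numerator and assume $\gcd(v_n, p) = 1$, so $u_n/v_n$ has purely periodic $p$-adic expansion $\overline{V_n}$ of period $\ell_n = \operatorname{ord}_{v_n}(p)$, with $\log_p H_n < \ell_n \leq v_n - 1 \leq H_n$. The approximation forces the prefix of $\mathbf{a}$ of length $N_n := \lceil (W+1) \log_p H_n \rceil$ to be a prefix of $\overline{V_n}$. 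Taking $U = \emptyset$, $V = V_n$ in Condition $(*)$ already produces exponent $N_n / \ell_n$, which exceeds $(W+1)/C$ whenever $\ell_n \leq C \log_p H_n$ for a suitable $C = C(\kappa)$.

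The main obstacle --- and the source of the cubic constant $8(\kappa+1)^2(2\kappa+1)$ --- is the range where $\ell_n$ is large, possibly of order $H_n$, making the naive certificate useless. Here the complexity hypothesis $p(\mathbf{a}, n) \leq \kappa n$ must intervene decisively. The plan is to invoke a combinatorial lemma prepared in the next section: a prefix of $\mathbf{a}$ matching a purely periodic word of period $\ell_n$ for a long stretch must, via pigeonhole against the at most $\kappa n$ distinct factors of each length $n$, admit a shorter effective period $\ell'_n$ together with a controlled pre-period, yielding Condition $(*)_\rho$ with $\rho \geq (W+1)/[8(\kappa+1)^2(2\kappa+1)]$. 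Comparing this with the assumed finiteness of $\Dio(\mathbf{a})$ closes the argument. This combinatorial step is a $p$-adic analogue of the argument underlying Th\'eor\`eme 1.1 of \cite{Adamczewski5}, and I expect it to be the technical heart of the proof.
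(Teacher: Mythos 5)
Your converse direction (if $\Dio ({\bf a})=+\infty$ then $\xi$ is a $U_1$-number) is essentially the paper's own argument: one builds the rational whose expansion is $U_n\overline{V_n}$, notes it agrees with $\xi$ on the first $|U_nV_n^{w_n}|$ digits and has height at most $p^{|U_nV_n|}$ (Lemma 3.7), and deduces $w_1(\xi )\geq \Dio ({\bf a})-1$ (Lemma 3.8); apart from checking that the approximants are genuinely distinct this is fine. The hard direction, however, has a genuine gap. Your plan is to convert a hypothetical very good rational approximation $u_n/v_n$ (with $|\xi -u_n/v_n|_p\leq H_n^{-W-1}$) into a repetition in ${\bf a}$ of exponent roughly $(W+1)/[8(\kappa +1)^2(2\kappa +1)]$ by a pigeonhole argument against the complexity bound. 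This cannot work in the regime you yourself identify as the obstacle: when the period $\ell _n$ (which can be as large as about $H_n$, while the forced agreement length is only about $(W+1)\log _pH_n$; note also that after clearing powers of $p$ the expansion is in general only \emph{eventually} periodic, with a pre-period of size up to about $\log _pH_n$, and your inequality $\log _pH_n<\ell _n$ need not hold) exceeds the agreement length, agreement with a periodic word carries no combinatorial information whatsoever about ${\bf a}$ --- any word is a prefix of some periodic word of longer period. The only repetitions the hypothesis $p({\bf a},n)\leq \kappa n$ can ever produce by pigeonhole are those of Lemma 3.9, of exponent $1+1/(4\kappa +2)$, a constant depending on $\kappa$ alone; no combinatorial lemma can output Condition $(*)_{\rho }$ with $\rho$ growing linearly in $W$, because the content of the hypothesis in that regime is arithmetic (a congruence $u_n\equiv v_n\cdot (\mbox{prefix value})\ \mathrm{mod}\ p^{N_n}$ with small height), not combinatorial.

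The paper's mechanism is different and is what your sketch is missing. From Lemma 3.9 one constructs explicit approximants $\alpha _n$ (the values of $U_n\overline{V_n}$) satisfying the \emph{upper} bound $|\xi -\alpha _n|_p\leq p^{-(1+1/(4\kappa +2))|U_nV_n|}$, and --- this is the decisive quantitative use of the finiteness of $\Dio ({\bf a})$, absent from your plan --- the \emph{lower} bound $|\xi -\alpha _n|_p\geq p^{-\delta |U_nV_n|}$ for any $\delta >\Dio ({\bf a})$ and $n$ large. Along the subsequence $n_{k+1}=2(\kappa +1)n_k$ the heights $\beta _k=p^{|U_{n_k}V_{n_k}|}$ satisfy $\beta _k<\beta _{k+1}\leq \beta _k^{4(\kappa +1)^2}$, so the $\alpha _{n_k}$ form a chain of controlled density; then Lemma 3.4 (an elementary separation argument via the Liouville inequality) shows that any other rational $\alpha$ of large height is either the nearby $\alpha _{n_k}$, in which case the lower bound limits how close it is to $\xi$, or is Liouville-separated from it, so that $|\xi -\alpha |_p$ is again not too small. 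This yields $w_1(\xi )\leq \delta \cdot 4(\kappa +1)^2(4\kappa +2)-1$, i.e.\ the bound (\ref{last}) after letting $\delta \rightarrow \Dio ({\bf a})$. In short: the hypothetical good approximation is defeated by comparing it arithmetically with the constructed periodic approximants, not by extracting a long repetition from it; without the lower bound supplied by the finiteness of $\Dio ({\bf a})$, no argument of the shape you propose can close, as the Sturmian case with unbounded partial quotients (complexity $n+1$, yet $w_1(\xi )=\infty$) shows the complexity hypothesis alone is powerless here.
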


There are various versions of Theorem \ref{main2}: $b$-ary expansion for real numbers \cite{Adamczewski5}, continued fraction expansion for real numbers \cite{Bugeaud2}, formal power series over a finite field, and its continued fraction expansion \cite{Ooto}.

\begin{proof}[Proof of Theorem \ref{app1} assuming Theorems \ref{main1} and \ref{main2}]
Since the sequence ${\bf a}$ is automatic, primitive morphic, or Strumian, $\xi $ is an $S$-, $T$-, or $U_1$-number by Lemmas \ref{auto2}, \ref{pri2}, \ref{Strumian2} and Theorem \ref{main1}.
It follows from Lemmas \ref{auto1}, \ref{pri1}, \ref{Strumian1} and Theorem \ref{main2} that $\xi $ is a $U_1$-number if ${\bf a}$ is Strumian with its slope whose continued fraction expansion has unbounded partial quotients, and $\xi $ is an $S$- or $T$-number otherwise.
\end{proof}

Let $\theta $ and $\rho $ be real numbers.
For an integer $n\geq 1$, we put
\begin{gather*}
t_n :=
\begin{cases}
1 &  \mbox{if } n=\lfloor k\theta +\rho \rfloor \mbox{ for some integer } k, \\
0 &  \mbox{otherwise},
\end{cases}
\\
t_n ' :=
\begin{cases}
1 &  \mbox{if } n=\lceil k\theta +\rho \rceil \mbox{ for some integer } k, \\
0 &  \mbox{otherwise}.
\end{cases}
\end{gather*}
We also put ${\bf t}_{\theta ,\rho }:=(t_n)_{n\geq 1}$ and ${\bf t}_{\theta ,\rho } ' :=(t_n ')_{n\geq 1}$.
The lemma below is well-known result.

\begin{lem}\label{St}
Let $\theta >1$ be an irrational real number and $\rho $ be a real number.
Then we have ${\bf t}_{\theta ,\rho }={\bf s}_{1/\theta ,-(\rho +1)/\theta } '$ and ${\bf t}_{\theta ,\rho } '={\bf s}_{1/\theta ,-(\rho +1)/\theta }$.
\end{lem}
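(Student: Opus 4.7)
The proof is a direct, elementary unwinding of both sides, and I plan a two-step computation.

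First, I would rewrite $t_n$ and $t_n'$ as ceiling/floor differences. By definition $t_n = 1$ iff the equation $\lfloor k\theta + \rho \rfloor = n$ has an integer solution $k$, equivalently iff the half-open interval $[(n-\rho)/\theta,\,(n+1-\rho)/\theta)$ contains an integer. The hypothesis $\theta > 1$ ensures this interval has length $1/\theta<1$, so it contains at most one integer and $t_n$ equals the number of integers in it; applying the identity $\#([a,b)\cap\mathbb{Z})=\lceil b\rceil-\lceil a\rceil$ yields
\begin{equation*}
t_n = \lceil (n+1-\rho)/\theta \rceil - \lceil (n-\rho)/\theta \rceil.
\end{equation*}
The analogous calculation for $t_n'$ (the condition $\lceil k\theta+\rho\rceil=n$ translates to $n-1<k\theta+\rho\leq n$, giving an interval $((n-1-\rho)/\theta,\,(n-\rho)/\theta]$, to which $\#((a,b]\cap\mathbb{Z})=\lfloor b\rfloor-\lfloor a\rfloor$ applies) produces $t_n' = \lfloor (n-\rho)/\theta \rfloor - \lfloor (n-1-\rho)/\theta \rfloor$.

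Second, I would plug $\alpha = 1/\theta$ and $\beta=-(\rho+1)/\theta$ into the definitions of $s_{n,\alpha,\beta}$ and $s_{n,\alpha,\beta}'$ and simplify. The key algebraic simplifications are $n\alpha+\beta = (n-\rho-1)/\theta$ and $(n+1)\alpha+\beta=(n-\rho)/\theta$, after which $s_{n,\alpha,\beta}$ and $s_{n,\alpha,\beta}'$ become floor and ceiling differences whose arguments coincide with those appearing in the expressions for $t_n'$ and $t_n$ derived in step one. Matching the two pairs of closed forms termwise gives the claimed identities.

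I do not anticipate any substantive obstacle. The only point requiring minor care is the boundary case where $k\theta + \rho$ happens to be an integer, but since $\theta$ is irrational this can occur for at most one integer $k$, and a direct check at that single point confirms the identity because $\lfloor m\rfloor=\lceil m\rceil=m$ for $m\in\mathbb{Z}$. The role of the hypothesis $\theta > 1$ is precisely to guarantee that every interval under consideration has length strictly less than $1$, which is what forces both $t_n,t_n'$ and the Sturmian characters $s_n,s_n'$ to take values in $\{0,1\}$ so that the cardinality identities used above encode the indicator functions directly.
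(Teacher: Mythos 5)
Your step-one formulas are correct: $t_n=\lceil (n+1-\rho)/\theta\rceil-\lceil (n-\rho)/\theta\rceil$ and $t_n'=\lfloor (n-\rho)/\theta\rfloor-\lfloor (n-1-\rho)/\theta\rfloor$. The gap is in the final ``matching'' step. With $\alpha=1/\theta$ and $\beta=-(\rho+1)/\theta$ you get $n\alpha+\beta=(n-1-\rho)/\theta$ and $(n+1)\alpha+\beta=(n-\rho)/\theta$, so $s_{n,\alpha,\beta}=\lfloor (n-\rho)/\theta\rfloor-\lfloor (n-1-\rho)/\theta\rfloor$, which indeed equals $t_n'$; that half is fine. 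But $s_{n,\alpha,\beta}'=\lceil (n-\rho)/\theta\rceil-\lceil (n-1-\rho)/\theta\rceil$, and these arguments do \emph{not} coincide with those in your formula for $t_n$ --- they are shifted by one unit in $n$. The two quantities are genuinely different: $s_{n,\alpha,\beta}'$ counts the integers $k$ with $n-1\le k\theta+\rho<n$, i.e.\ it is the indicator that $n-1$ (not $n$) is of the form $\lfloor k\theta+\rho\rfloor$. Concretely, for $\theta=\sqrt2$, $\rho=0$, $n=3$ one has $t_3=0$, while $\lceil 3/\sqrt2\rceil-\lceil 2/\sqrt2\rceil=3-2=1$. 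So the asserted termwise coincidence fails for the first identity; what your computation actually proves is ${\bf t}_{\theta,\rho}'={\bf s}_{1/\theta,-(\rho+1)/\theta}$ together with ${\bf t}_{\theta,\rho}={\bf s}_{1/\theta,-\rho/\theta}'$, equivalently that ${\bf s}_{1/\theta,-(\rho+1)/\theta}'$ is the shift of ${\bf t}_{\theta,\rho}$ by one index.

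Note that the paper offers no proof (it calls the lemma well known), so there is nothing to compare approaches with; your method of counting lattice points in intervals of length $1/\theta<1$ is the natural one. Carried out honestly, it shows that the first identity as literally stated has an off-by-one in the intercept (it should read ${\bf t}_{\theta,\rho}={\bf s}_{1/\theta,-\rho/\theta}'$). This discrepancy is harmless for the application in Theorem \ref{app2}, where only the fact that ${\bf t}_{\theta,\rho}$ is Sturmian with slope $1/\theta$ is used, and neither a change of intercept nor an index shift affects that. But a correct write-up must either prove the corrected identity or record the shift explicitly; asserting that the arguments ``coincide'' when they differ by one is precisely where your proof breaks down.
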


\begin{lem}[Mahler \cite{Mahler1}]\label{Ma}
Let $\xi ,\eta $ be $p$-adic numbers.
If $\xi $ and $\eta $ are algebraically dependent, then $\xi $ and $\eta $ are in the same class.
\end{lem}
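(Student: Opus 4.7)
The plan is to adapt Mahler's classical resultant method. The $A$-case is immediate: if $\xi$ is algebraic and $P(\xi,\eta)=0$ for a nonzero $P\in\lZ[X,Y]$, then $\eta$ lies in a finite algebraic extension of $\lQ$, hence is algebraic. So I may assume both $\xi$ and $\eta$ are transcendental and fix a nonzero $P\in\lZ[X,Y]$, irreducible over $\lQ$, with $P(\xi,\eta)=0$; write $d_1=\deg_X P$ and $d_2=\deg_Y P$.

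To any integer polynomial $Q(X)$ of degree at most $n$ with $|Q(\xi)|_p$ small, I associate the resultant
\[
R(Y):=\Res_X\bigl(P(X,Y),Q(X)\bigr)\in\lZ[Y].
\]
Standard resultant estimates give $\deg R\le d_2 n$ and $H(R)\le C_1 H(Q)^{d_2}$, where $C_1$ depends only on $P$. Factoring $P(X,\eta)=b_0(\eta)\prod_{j=1}^{d_1}(X-\xi_j)$ over an algebraic closure of $\lQ_p$ with $\xi_1=\xi$, the resultant formula yields
\[
R(\eta)=\pm\, b_0(\eta)^n\prod_{j=1}^{d_1}Q(\xi_j).
\]
Bounding each $|Q(\xi_j)|_p$ for $j\ge 2$ by a constant multiple of $H(Q)$ (with implicit constants depending only on $P$), one obtains
\[
|R(\eta)|_p\le C_3\, |Q(\xi)|_p\, H(Q)^{d_1-1}.
\]

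Provided $R(\eta)\ne 0$, these inequalities combine into a comparison of the shape $w_{d_2 n}(\eta)\ge \bigl(w_n(\xi)+1\bigr)/d_2 - C$, and swapping the roles of $X$ and $Y$ yields the symmetric estimate $w_{d_1 n}(\xi)\ge \bigl(w_n(\eta)+1\bigr)/d_1-C'$. Together, these show that $w_n(\xi)<+\infty$ for every $n$ if and only if the same holds for $\eta$, and that $w(\xi)=+\infty$ if and only if $w(\eta)=+\infty$; hence $\xi$ and $\eta$ belong to the same class among $S$, $T$, and $U$. The main obstacle is the nonvanishing of $R(\eta)$: since $P$ is irreducible over $\lQ$ and $\xi$ is transcendental, $P(X,\eta)$ is a nonzero polynomial over $\overline{\lQ_p}$, and $R(\eta)=0$ would force $Q$ to share a root with $P(X,\eta)$ in $\overline{\lQ_p}$. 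This degenerate case is handled by a small perturbation of $Q$, or by passing to a suitable factor, with only a bounded loss in the constants; likewise one arranges $b_0(\eta)\ne 0$ when choosing the irreducible $P$.
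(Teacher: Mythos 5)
The paper does not prove this lemma at all: it is quoted as Mahler's theorem with a citation to \cite{Mahler1}, and the resultant/transfer argument you sketch is precisely the classical route, so there is no competing proof in the text to compare with. Your skeleton (dispose of the $A$-case, take an irreducible $P\in\lZ[X,Y]$ with $P(\xi,\eta)=0$, eliminate $X$ against a good approximating polynomial $Q$ via $R(Y)=\Res_X(P(X,Y),Q(X))$, and convert the two symmetric inequalities into preservation of the $S/T/U$ trichotomy) is the right one. The genuine weak point is exactly the step you call the main obstacle, the nonvanishing of $R(\eta)$, and your proposed fixes do not work as stated: a ``small perturbation of $Q$'' destroys the hypothesis $0<|Q(\xi)|_p\le H(Q)^{-w-1}$ on which the whole transfer rests, and ``passing to a suitable factor'' is left unexplained. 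Fortunately the obstacle is vacuous, and this is how the step should be closed: since $\xi$ is transcendental, $P$ must satisfy $\deg_Y P\ge 1$; if $P(\alpha,\eta)=0$ for some algebraic $\alpha$, then $P(\alpha,Y)$ is a polynomial with algebraic coefficients vanishing at the transcendental $\eta$, hence $P(\alpha,Y)\equiv 0$ and the minimal polynomial of $\alpha$ would divide the irreducible $P$ in $\lQ[X,Y]$, which is impossible. So every root $\xi_j$ of $P(X,\eta)$ is transcendental, whence $Q(\xi_j)\ne 0$ for every nonzero $Q\in\lZ[X]$ and $R(\eta)\ne 0$ automatically; the same reasoning gives $b_0(\eta)\ne 0$ without any choice being needed.

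Beyond that, a few quantitative slips should be repaired, though none affects the qualitative conclusion. In the Sylvester determinant the coefficients of $Q$ occupy $d_1$ rows, so the correct bounds are $\deg R\le d_2 n$ and $H(R)\le C(P)^{\,n}H(Q)^{d_1}$, not $H(Q)^{d_2}$, and the constants necessarily depend on $n$ (a factor like $(d_1+n)!\,H(P)^n$); this is harmless because only the exponent of $H(Q)$ matters when comparing $w_n(\xi)$ with $w_{d_2 n}(\eta)$, but it means the transfer inequality should read roughly $w_{d_2 n}(\eta)\ge (w_n(\xi)+1)/d_1-1$. Also, in the $p$-adic setting the estimate $|Q(\xi_j)|_p\le\max(1,|\xi_j|_p)^n$ involves no $H(Q)$ at all, since integer coefficients satisfy $|\cdot|_p\le 1$, so $|R(\eta)|_p\le C(P)^{\,n}|Q(\xi)|_p$. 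Finally, you should add the routine remark that infinitely many distinct $Q$ yield infinitely many distinct resultants $R$ (for instance because $R(\eta)\ne 0$ while $|R(\eta)|_p\to 0$), so that the ``infinitely many polynomials'' requirement in the definition of $w_m(\eta)$ is met. With these repairs your argument is a correct reconstruction of Mahler's proof.
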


\begin{proof}[Proof of Theorem \ref{app2} assuming Theorems \ref{main1} and \ref{main2}]
Set $\xi :=\sum _{n=1}^{\infty } p^{\lfloor n\theta +\rho  \rfloor }$ and \\ $\eta:=\sum _{n=1}^{\infty } p^{\lfloor n\theta '+\rho  '\rfloor }$.
By the definition, the digits of $\xi $ and $\eta $ are ${\bf t}_{\theta ,\rho }$ and ${\bf t}_{\theta ',\rho '}$, respectively.
It follows from Lemma \ref{St} that ${\bf t}_{\theta ,\rho }$ (resp.\ ${\bf t}_{\theta ',\rho '}$) is Strumian with its slope whose continued fraction expansion has bounded (resp.\ unbounded) partial quotients.
Therefore, $\xi $ is an $S$- or $T$-number and $\eta $ is a $U_1$-number by Theorem \ref{app1}.
Hence, we see that $\xi $ and $\eta $ are algebraically independent by Lemma \ref{Ma}.
\end{proof}

\section{Preliminaries}\label{€"õ}

We recall several facts about the exponents $w_n$ and $w_n ^{*}$.

\begin{thm}\label{"äŠr}
Let $n\geq 1$ be an integer and $\xi $ be in $\lQ _p$.
Then we have
\begin{eqnarray*}
w_n ^{*}(\xi )\leq w_n(\xi )\leq w_n ^{*}(\xi )+n-1.
\end{eqnarray*} 
\end{thm}

\begin{proof}
See \cite{Morrison}.
\end{proof}

\begin{thm}\label{Mahler‰ºŒÀ}
Let $n\geq 1$ be an integer and $\xi \in \lQ _p$ be not algebraic of degree at most $n$.
Then we have
\begin{align*}
w_n (\xi ) \geq n,\quad w_n ^{*}(\xi ) & \geq \frac{n+1}{2}.
\end{align*}
Furthermore, if $n=2$, then $w_2 ^{*}(\xi )\geq 2$.
\end{thm}

\begin{proof}
See \cite{Mahler1, Morrison}.
\end{proof}

We recall Liouville inequality, that is, a non trivial lower bound of differences of two algebraic numbers.

\begin{lem}\label{Liouville inequality}
Let $\alpha ,\beta \in \lQ _p$ be distinct algebraic numbers of degree $m, n$, respectively.
Then we have
\begin{eqnarray*}
|\alpha -\beta |_p \geq \frac{(m+1)^{-n}(n+1)^{-m}}{H(\alpha )^n H(\beta )^m}.
\end{eqnarray*}
\end{lem}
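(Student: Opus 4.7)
The plan is to apply the product formula on $\lQ$ to the resultant (or discriminant, in the conjugate case) of the minimal polynomials of $\alpha$ and $\beta$. Let $P(X) = a_m X^m + \cdots + a_0 \in \lZ[X]$ be the primitive minimal polynomial of $\alpha$ and $Q(X) = b_n X^n + \cdots + b_0 \in \lZ[X]$ that of $\beta$, so that $H(P) = H(\alpha)$ and $H(Q) = H(\beta)$. Denote by $\alpha_1 = \alpha, \ldots, \alpha_m$ and $\beta_1 = \beta, \ldots, \beta_n$ the conjugates in $\overline{\lQ _p}$.

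First consider the generic case $P \neq Q$ (so $P$ and $Q$ are coprime as distinct irreducibles). Then $R := \Res(P, Q) = a_m^n b_n^m \prod_{i, j}(\alpha_i - \beta_j)$ is a nonzero integer. The plan is to execute three steps: (i) apply Hadamard's inequality to the $(m+n) \times (m+n)$ Sylvester matrix of $P$ and $Q$ to obtain the archimedean upper bound $|R| \leq (m+1)^{n/2}(n+1)^{m/2} H(\alpha)^n H(\beta)^m$, whence the product formula on $\lQ$ yields the $p$-adic lower bound $|R|_p \geq 1/|R|$; (ii) take $p$-adic absolute values in the factorisation of $R$ and isolate the single factor $|\alpha - \beta|_p$ in the denominator; (iii) bound the remaining $mn - 1$ factors $|\alpha_i - \beta_j|_p$ from above so that they are absorbed into the prefactor $|a_m|_p^n |b_n|_p^m$.

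The crucial ingredient for step (iii) is the $p$-adic Gauss norm identity: for any $f(X) = c_d X^d + \cdots + c_0 \in \lZ[X]$ with roots $\gamma_1, \ldots, \gamma_d$ in $\overline{\lQ _p}$,
\[ |c_d|_p \prod_{i=1}^{d}\max(1, |\gamma_i|_p) = \max_{0 \leq j \leq d} |c_j|_p \leq 1. \]
Applied to $P$ and $Q$, this gives $\prod_i \max(1, |\alpha_i|_p) \leq 1/|a_m|_p$ and $\prod_j \max(1, |\beta_j|_p) \leq 1/|b_n|_p$. Combined with the ultrametric estimate $|\alpha_i - \beta_j|_p \leq \max(1, |\alpha_i|_p)\max(1, |\beta_j|_p)$, this yields $\prod_{i,j}|\alpha_i - \beta_j|_p \leq 1/(|a_m|_p^n |b_n|_p^m)$. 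Removing the $(1,1)$-factor, the denominator in the expression for $|\alpha - \beta|_p$ is therefore at most $1$, so $|\alpha - \beta|_p \geq |R|_p$, which gives the claimed bound (in fact with a slightly better constant than stated).

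The remaining case $P = Q$ (forcing $m = n$, $H(\alpha) = H(\beta)$, and $\beta$ a Galois conjugate of $\alpha$) is handled in exactly the same way, replacing the resultant by $\Disc(P) = a_m^{2m-2}\prod_{i<j}(\alpha_i - \alpha_j)^2 \in \lZ \setminus \{0\}$, using the Hadamard bound on the Sylvester matrix of $P$ and $P'$ for the archimedean estimate, and isolating the factor $(\alpha - \beta)^2$ before applying the same Gauss-norm collapse. The main technical hurdle throughout is identifying and exploiting the Gauss-norm identity together with the factoring bound $|\alpha_i - \beta_j|_p \leq \max(1, |\alpha_i|_p)\max(1, |\beta_j|_p)$; without these, the naive pointwise estimate $|\alpha_i|_p \leq 1/|a_m|_p$ alone would yield only a much weaker bound with exponent $mn - 1$ on each of $H(\alpha)$ and $H(\beta)$, rather than the sharp exponents $n$ and $m$.
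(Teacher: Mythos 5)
The paper does not actually prove this lemma; it only cites Lemma 2.5 of Pejkovi\'c's thesis, so there is no in-paper argument to compare against. Your proof is correct and is precisely the standard argument behind such $p$-adic Liouville inequalities: Hadamard's bound on the Sylvester matrix, the product formula applied to the nonzero integer $\Res(P,Q)$ (resp.\ $\Disc(P)$ when $\beta$ is a conjugate of $\alpha$), and the Gauss-norm identity $|c_d|_p\prod_i\max(1,|\gamma_i|_p)=\max_j|c_j|_p\le 1$ to absorb the remaining factors $|\alpha_i-\beta_j|_p$; in the coprime case this even yields the sharper constant $(m+1)^{-n/2}(n+1)^{-m/2}$. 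The only detail you leave implicit is the numerical check in the conjugate case, where isolating $(\alpha-\beta)^2$ gives $|\alpha-\beta|_p\ge|\Disc(P)|^{-1/2}$ and one must verify that the Hadamard bound on $\Res(P,P')$ makes this at least $(m+1)^{-2m}H(\alpha)^{-2m}$; this is routine and holds with room to spare since $H\ge 1$ and the exponent of $H$ is only $m-\tfrac12$.
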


\begin{proof}
See \cite[Lemma 2.5]{Pejkovic}.
\end{proof}

Applying Lemma \ref{Liouville inequality}, we give an estimate for the value of $w_1$.

\begin{lem}\label{–³—""x"äŠr}
Let $\xi $ be in $\lQ _p$ and $c_0,c_1,c_2,\theta ,\rho ,\delta $ be positive numbers.
Let $(\beta _j)_{j\geq 1}$ be a sequence of positive integers with $\beta _j<\beta _{j+1}\leq c_0 \beta _j ^\theta $ for all $j\geq 1$.
Assume that there exists a sequence of distinct terms $(\alpha _j)_{j\geq 1}$ with $\alpha _j \in \lQ$ such that for all $j\geq 1$
\begin{gather*}
\frac{c_1}{\beta _j ^{1+\rho }}\leq \left| \xi -\alpha _j \right| _p \leq \frac{c_2}{\beta _j ^{1+\delta }}, \\
H(\alpha _j)\leq c_3 \beta _j.
\end{gather*}
Then we have
\begin{eqnarray*}
\delta \leq w_1(\xi )\leq (1+\rho )\frac{\theta }{\delta }-1.
\end{eqnarray*}
\end{lem}

\begin{rem}
There are several versions of Lemma \ref{–³—""x"äŠr} as in \cite{Adamczewski4, Adamczewski5, Alladi1, Amou1, Bugeaud2, Bugeaud3, Danilov1, Firicel, Ooto, Voloch1}.
\end{rem}

\begin{proof}
Let $\alpha $ be a rational number with sufficiently large height.
We define the integer $j_0\geq 1$ by $\beta _{j_0}\leq c_0 (4c_2 c_3 H(\alpha ))^{\theta /\delta }<\beta _{j_0+1}$.
Firstly, we consider the case $\alpha =\alpha _{j_0}$.
By the assumption, we obtain
\begin{eqnarray*}
|\xi -\alpha |_p\geq c_1 \beta _{j_0} ^{-1-\rho }\geq c_0 ^{-1-\rho }c_1 (4 c_2 c_3)^{-(1+\rho )\theta /\delta }H(\alpha )^{-(1+\rho )\theta /\delta }.
\end{eqnarray*}
Next, we consider the other case.
Then, by the assumption, we have
\begin{eqnarray*}
H(\alpha )<(4 c_2 c_3 )^{-1} (c_0 ^{-1} \beta _{j_0+1})^{\delta /\theta }\leq (4 c_2 c_3 )^{-1} \beta _{j_0} ^{\delta }.
\end{eqnarray*}
Therefore, we obtain
\begin{eqnarray*}
|\alpha -\alpha _{j_0}|_p \geq (4 H(\alpha )H(\alpha _{j_0}))^{-1} >c_2 \beta _{j_0} ^{-1-\delta }
\end{eqnarray*}
by Lemma \ref{Liouville inequality}.
Hence, it follows that
\begin{eqnarray*}
|\xi -\alpha |_p & = & |\alpha -\alpha _{j_0}|_p\geq (4 H(\alpha )H(\alpha _{j_0}))^{-1}\\
& \geq & 4^{-1-\theta /\delta }c_0 ^{-1} c_2 ^{-\theta /\delta } c_3 ^{-1-\theta /\delta }H(\alpha )^{-1-\theta /\delta }.
\end{eqnarray*}
By Theorem \ref{"äŠr}, we have $w_1(\xi )=w_1 ^{*}(\xi )$.
Thus, we obtain
\begin{eqnarray*}
\delta \leq w_1(\xi )\leq \max \left( (1+\rho )\frac{\theta }{\delta }-1, \frac{\theta }{\delta } \right) =(1+\rho )\frac{\theta }{\delta }-1.
\end{eqnarray*}
\end{proof}

We denote by $M_{\lQ }$ the set of all prime numbers and $\infty $.
We denote by $|\cdot |_{\infty }$ the usual absolute value in $\lQ $. 
For ${\bf x}=(x_1,\ldots ,x_n)\in \lQ ^n$ and $v \in M_{\lQ }$, we define the {\itshape norm} and the {\itshape height} of ${\bf x}$ by $|{\bf x}| _v=\max _{1\leq i\leq n}|x_i|_v$ and $H({\bf x})=\prod _{v\in M_{\lQ }}^{} |{\bf x}|_v$.

The proof of Theorem \ref{main1} mainly depends on the following theorem which is so-called Quantitative Subspace Theorem and consequence of Corollary 3.2 in \cite{Evertse}.

\begin{thm}\label{subsp}
Let $\alpha \in \lQ _p$ be an algebraic number of degree $d$ and $0<\varepsilon <1$.
Define linear forms
\begin{gather*}
L_{1\infty }(X,Y,Z)=X,\quad L_{2\infty }(X,Y,Z)=Y,\quad L_{3\infty }(X,Y,Z)=Z, \\
L_{1 p}(X,Y,Z)=X,\quad L_{2 p}(X,Y,Z)=Y,\quad L_{3 p}(X,Y,Z)=\alpha X-\alpha Y-Z.
\end{gather*}
Then all integer solutions ${\bf x}=(x_1,x_2,x_3)$ of
\begin{eqnarray*}
\prod _{v\in \{ p,\infty \}}^{}\prod _{i=1}^{3}|L_{i v}({\bf x})|_v\leq |{\bf x}|_{\infty } ^{-\varepsilon }
\end{eqnarray*}
with
\begin{eqnarray*}
H({\bf x})\geq \max \left( \left( \sqrt{d+1}H(\alpha )\right) ^{1/12 d}, 27^{1/\varepsilon }\right)
\end{eqnarray*}
lie in the union of at most
\begin{eqnarray*}
2^{16} 3^{39} 5^{10} \varepsilon ^{-9} \log (3\varepsilon ^{-1}d) \log (\varepsilon ^{-1}\log 3 d)
\end{eqnarray*}
proper linear subspaces of $\lQ ^3$.
\end{thm}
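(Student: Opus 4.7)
The plan is to derive Theorem \ref{subsp} as a direct specialization of Corollary 3.2 of Evertse \cite{Evertse} to the concrete setting $n=3$, ground field $\lQ$, and set of places $S=\{p,\infty\}$. The proof is not original in content; it is a translation and verification of hypotheses, together with the substitution $n=3$ into the numerical constants that Evertse states for arbitrary $n$.

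First I would check the hypotheses required by Corollary 3.2 for each place $v\in S$. At $v=\infty$ the triple $(L_{1\infty},L_{2\infty},L_{3\infty})=(X,Y,Z)$ has coefficient matrix equal to the identity, so it is linearly independent with determinant $1$. At $v=p$ the triple $(L_{1p},L_{2p},L_{3p})=(X,Y,\alpha X-\alpha Y-Z)$ has coefficient matrix with rows $(1,0,0), (0,1,0), (\alpha,-\alpha,-1)$ and determinant $-1$, so it is linearly independent as well. The algebraic numbers appearing as coefficients all lie in the number field $K=\lQ (\alpha )$ of degree $d$, and the only nontrivial height among the six forms is that of $L_{3p}$, whose absolute multiplicative height is comparable to $H(\alpha )$.

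Next I would match the numerical threshold on $H({\bf x})$. In Evertse's formulation, ${\bf x}$ must lie outside a ball whose radius depends on the product of normalized heights of all the linear forms and on the degree of the coefficient field; specialized to our setting the only nontrivial contribution comes from $L_{3p}$ and gives the term $(\sqrt{d+1}H(\alpha ))^{1/12 d}$, while the second term $27^{1/\varepsilon }$ (with $27=3^3$) is exactly the $n=3$ case of the universal $n^{n/\varepsilon }$-type threshold from Evertse's hypothesis. Once these are in place, Evertse's corollary bounds the number of exceptional subspaces by a quantity which for $n=3$ takes the exact form $2^{16}3^{39}5^{10}\varepsilon ^{-9}\log (3\varepsilon ^{-1}d)\log (\varepsilon ^{-1}\log 3d)$; this is precisely the assertion in the statement.

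The main obstacle, and really the only one, is bookkeeping: one must carefully translate between Evertse's absolute-value conventions, which normalize heights via a product over all places of $K=\lQ (\alpha )$, and the statement here, which uses only the two rational absolute values $|\cdot |_p$ and $|\cdot |_{\infty }$. This requires verifying that the product $\prod _{v\in \{ p,\infty \}} \prod _{i=1}^{3}|L_{iv}({\bf x})|_v$ appearing in our inequality matches the corresponding normalized double product in Evertse's theorem up to constants that are absorbed in the threshold for $H({\bf x})$, and that the exponent $\varepsilon $ on $|{\bf x}|_{\infty }$ agrees with Evertse's exponent on the $K$-adelic height. Once these normalizations are reconciled and $n=3$ is substituted, every numerical constant in the statement is accounted for.
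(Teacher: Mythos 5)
The paper offers no proof of this statement at all: it is introduced only as a ``consequence of Corollary 3.2 in \cite{Evertse}'', which is exactly the specialization (to $n=3$, $K=\lQ (\alpha )$, $S=\{p,\infty \}$) that you carry out, so your route coincides with the paper's. Your hypothesis checks (unit determinants of the two coefficient matrices, the height of $L_{3p}$ being the only nontrivial one, and the normalization bookkeeping) are the right ones, and in fact supply more detail than the paper does.
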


Consider a vector hyperplane of $\lQ ^n$
\begin{eqnarray*}
\mathcal{H}=\{ (x_1,\ldots ,x_n) \in \lQ ^n \mid y_1 x_1+\cdots +y_n x_n=0 \} ,
\end{eqnarray*}
where ${\bf y}=(y_1,\ldots ,y_n) \in \lZ ^n\setminus \{ {\bf 0} \} , \gcd (y_1,\ldots ,y_n)=1$.
The {\itshape height} of $\mathcal{H}$, denoted by $H(\mathcal{H})$, is defined to be $|{\bf y}|_{\infty }$.

The lemma below is easily seen.

\begin{lem}\label{hyper}
Let $m, n$ be integers with $1\leq m<n$ and ${\bf x}_1,\ldots ,{\bf x}_m \in \lZ ^n$ be linearly independent vectors such that $|{\bf x}_1|_{\infty }\leq \ldots \leq |{\bf x}_m|_{\infty }$.
Then there exists a vector hyperplane $\mathcal{H}$ of $\lQ ^n$ such that ${\bf x}_1,\ldots ,{\bf x}_m \in \mathcal{H}$ and
\begin{eqnarray*}
H(\mathcal{H})\leq m! |{\bf x}_m|_{\infty }^m.
\end{eqnarray*}
\end{lem}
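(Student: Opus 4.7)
The plan is to write down a defining integer linear form for the hyperplane explicitly as an $(m+1)\times (m+1)$ determinant built from the coordinates of ${\bf x}_1,\ldots ,{\bf x}_m$, and then to bound its coefficients by Leibniz's formula.

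Since ${\bf x}_1,\ldots ,{\bf x}_m$ are linearly independent over $\lQ $, the $m\times n$ matrix $M$ whose rows are these vectors has rank $m$, so some $m\times m$ minor of $M$ is nonzero; choose columns $j_1<\cdots <j_m$ realising such a nonzero minor. Using $m<n$, pick any index $k\in \{1,\ldots ,n\}\setminus \{j_1,\ldots ,j_m\}$, and write $\{j_0',\ldots ,j_m'\}=\{j_1,\ldots ,j_m\}\cup \{k\}$ with $j_0'<\cdots <j_m'$. Consider the linear form
\[
L(X_1,\ldots ,X_n)=\det \begin{pmatrix} X_{j_0'} & X_{j_1'} & \cdots & X_{j_m'} \\ ({\bf x}_1)_{j_0'} & ({\bf x}_1)_{j_1'} & \cdots & ({\bf x}_1)_{j_m'} \\ \vdots & \vdots & & \vdots \\ ({\bf x}_m)_{j_0'} & ({\bf x}_m)_{j_1'} & \cdots & ({\bf x}_m)_{j_m'} \end{pmatrix}
\]
obtained by expanding along the first row. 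Substituting ${\bf x}_i$ into $L$ yields a determinant with two equal rows, so $L({\bf x}_i)=0$ for every $i$; moreover, the coefficient of $X_k$ is, up to sign, the chosen nonzero minor on columns $\{j_1,\ldots ,j_m\}$, so the coefficient vector $(y_1,\ldots ,y_n)$ is a nonzero integer vector supported on $\{j_0',\ldots ,j_m'\}$. The vector hyperplane $\mathcal{H}=\{{\bf X}\in \lQ ^n\mid L({\bf X})=0\}$ then contains ${\bf x}_1,\ldots ,{\bf x}_m$.

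For the height bound, each $y_{j_\ell '}$ is, up to sign, an $m\times m$ minor whose $i$-th row consists of entries of ${\bf x}_i$, hence of absolute value at most $|{\bf x}_i|_{\infty }\leq |{\bf x}_m|_{\infty }$ by the ordering hypothesis. Leibniz's formula then yields
\[
|y_{j_\ell '}|\leq m!\prod _{i=1}^{m}|{\bf x}_i|_{\infty }\leq m!\,|{\bf x}_m|_{\infty }^{m}.
\]
Dividing $(y_1,\ldots ,y_n)$ by the greatest common divisor of its entries can only shrink each $|y_j|$, so the resulting primitive integer vector has $\ell ^{\infty }$-norm at most $m!\,|{\bf x}_m|_{\infty }^{m}$, which is $H(\mathcal{H})$ by definition. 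There is no genuine obstacle here: the only points requiring attention are the existence of the auxiliary column $k$, which is guaranteed precisely by the hypothesis $m<n$, and the use of the ordering $|{\bf x}_1|_{\infty }\leq \cdots \leq |{\bf x}_m|_{\infty }$ to absorb the product $\prod _i |{\bf x}_i|_{\infty }$ into the single factor $|{\bf x}_m|_{\infty }^{m}$ in Leibniz's estimate.
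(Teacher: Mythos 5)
Your proof is correct: the cofactor-expansion construction of the linear form, the vanishing via a repeated row, the nonvanishing of the coefficient of $X_k$ (which uses $m<n$), and the Leibniz bound $m!\prod_{i=1}^{m}|{\bf x}_i|_{\infty }\leq m!\,|{\bf x}_m|_{\infty }^{m}$ followed by division by the gcd all go through. The paper gives no proof at all (the lemma is stated as ``easily seen''), and your argument is precisely the standard one it implicitly relies on, so there is nothing to compare beyond noting that your write-up supplies the missing details correctly.
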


\begin{lem}\label{''³ãŒÀ}
Let $U \in P^{*}, V\in P^{+}$, and $r,s$ be lengths of the words $U,V$, respectively.
Put $(a_n)_{n\geq 0}:=U\overline{V}$ and $\alpha :=\sum _{n=0}^{\infty }a_n p^n \in \lQ _p$.
Then we have $H(\alpha )\leq p^{r+s}$.
\end{lem}

\begin{proof}
A straightforward computation shows that
\begin{eqnarray*}
\alpha & = & \sum _{n=0}^{r-1}a_n p^n+\left( \sum _{m=0}^{s-1}a_{m+r}p^{m+r}\right) \left( \sum _{k=0}^{\infty }p^{ks}\right) \\
& = & \frac{(p^s-1)\sum _{n=0}^{r-1}a_n p^n-\sum _{m=0}^{s-1}a_{m+r}p^{m+r}}{p^s-1}.
\end{eqnarray*}
Therefore, we have 
\begin{eqnarray*}
H(\alpha )\leq \max \left( p^s-1, (p^s-1)\sum _{n=0}^{r-1}a_n p^n, \sum _{m=0}^{s-1}a_{m+r}p^{m+r} \right) \leq p^{r+s}.
\end{eqnarray*}
\end{proof}

In order to prove Theorem \ref{main2}, we show the following lemma.

\begin{lem}\label{•Ð'¤}
Let ${\bf a}=(a_n)_{n\geq 0}$ be a non-ultimately periodic sequence over $P$.
Set $\xi :=\sum_{n=0}^{\infty}a_n p^n \in \lQ _p$.
Then we have
\begin{eqnarray}\label{‰º}
w_1(\xi )\geq \max (1, \Dio ({\bf a})-1).
\end{eqnarray}
\end{lem}

\begin{proof}
Since $\xi $ is irrational, we have $w_1(\xi )\geq 1$ by Theorem \ref{Mahler‰ºŒÀ}.
Without loss of generality, we may assume that $\Dio ({\bf a})>1$.
Take a real number $\delta $ such that $1<\delta <\Dio ({\bf a})$.
For $n\geq 1$, there exist finite words $U_n,V_n $ and a positive rational number $w_n $ such that $U_n V_n ^{w_n}$ are the prefix of ${\bf a}$, the sequence $(|V_n ^{w_n}|)_{n\geq 1}$ is strictly increasing, and $|U_n V_n ^{w_n}|\geq \delta |U_n V_n|$.
For $n\geq 1$, we set rational number
\begin{eqnarray*}
\alpha _n:=\sum _{i=0}^{\infty }b_i ^{(n)} p^i
\end{eqnarray*}
where $(b_i ^{(n)})_{i\geq 0}$ is the infinite word $U_{n}\overline{V_n}$.
Since $\xi $ and $\alpha _n$ have the same first $|U_n V_n ^{w_n}|$-th digits, we obtain
\begin{eqnarray*}
\left| \xi -\alpha _n \right| \leq p^{-\delta |U_n V_n|}\leq H(\alpha _n)^{-\delta }
\end{eqnarray*}
by Lemma \ref{''³ãŒÀ}.
Hence, we have (\ref{‰º}).
\end{proof}

The following lemma is a slight improvement of a part of Lemma 9.1 in \cite{Bugeaud2}.

\begin{lem}\label{'g'ݍ‡'킹}
Let ${\bf a}=(a_n)_{n\geq 0}$ be a sequence on a finite set $\mathcal{A}$.
Assume that there exist integers $\kappa \geq 2$ and $n_0\geq 0$ such that for all $n\geq n_0$,
\begin{eqnarray*}
p({\bf a},n)\leq \kappa n.
\end{eqnarray*}
Then, for each $n\geq n_0$, there exist finite words $U_n, V_n$ over $\mathcal{A}$ and a positive rational number $w_n$ such that the following hold:
\begin{description}
\item[(i)] $U_n V_n ^{w_n}$ is a prefix of ${\bf a}$,
\item[(ii)] $|U_n|\leq 2\kappa |V_n|$,
\item[(iii)] $n/2 \leq |V_n|\leq \kappa n$,
\item[(iv)] if $U_n$ is not an empty word, then the last letter of $U_n$ and $V_n$ are different,
\item[(v)] $|U_n V_n ^{w_n}|/|U_n V_n|\geq 1+1/(4\kappa +2)$,
\item[(vi)] $|U_n V_n|\leq (\kappa +1)n-1$,
\end{description}
\end{lem}

\begin{proof}
For $n\geq 1$, we denote by $A(n)$ the prefix of ${\bf a}$ of length $n$.
By Pigeonhole principle, for each $n\geq n_0$, there exists a finite word $W_n$ of length $n$ such that the word appears to $A((\kappa +1)n)$ at least twice.
Therefore, for each $n\geq n_0$, there exist finite words $B_n, D_n, E_n \in \mathcal{A}^{*}$ and $C_n \in \mathcal{A}^{+}$ such that
\begin{eqnarray*}
A((\kappa +1)n)=B_n W_n D_n E_n =B_n C_n W_n E_n.
\end{eqnarray*}
We take these words in such way that if $B_n$ is not empty, then the last letter of $B_n$ is different from that of $C_n$.

We first consider the case of $|C_n|\geq |W_n|$.
Then, there exists $F_n \in \mathcal{A}^{*}$ such that
\begin{eqnarray*}
A((\kappa +1)n)=B_n W_n F_n W_n E_n.
\end{eqnarray*}
Put $U_n :=B_n, V_n:=W_n F_n$, and $w_n:=|W_n F_n W_n|/|W_n F_n|$.
Since $U_n V_n ^{w_n}=B_n W_n F_n W_n$, the word $U_n V_n ^{w_n}$ is a prefix of ${\bf a}$.
It is obvious that $|U_n|\leq (\kappa -1)|V_n|$ and $n\leq |V_n|\leq \kappa n$.
By the definition, we have (iv) and (vi).
Furthermore, we see that
\begin{gather*}
\frac{|U_n V_n ^{w_n}|}{|U_n V_n|}=1+\frac{n}{|U_n V_n|}\geq 1+\frac{1}{\kappa }.
\end{gather*}

We next consider the case of $|C_n|< |W_n|$.
Since the two occurrences of $W_n$ do overlap, there exists a rational number $d_n>1$ such that $W_n=C_n ^{d_n}$.
Put $U_n:=B_n, V_n:=C_n ^{\lceil d_n /2 \rceil}$, and $w_n:=(d_n +1)/\lceil d_n/2 \rceil$.
Obviously, we have (i) and (iv).
Since $ \lceil d_n/2 \rceil \leq d_n$ and $d_n |C_n|\leq 2 \lceil d_n/2 \rceil |C_n|$, we get $n/2\leq |V_n|\leq n$.
Using (iii) and $|U_n|\leq \kappa n-1$, we see (ii) and (vi).
It is immediate that $w_n \geq 3/2$.
Hence, we obtain
\begin{eqnarray*}
\frac{|U_n V_n ^{w_n}|}{|U_n V_n|} & = & 1+\frac{ \lceil (w_n-1)|V_n| \rceil}{|U_n V_n|} \geq 1+ \frac{w_n-1}{|U_n|/|V_n| +1} \\
& \geq & 1+\frac{1/2}{2\kappa +1}= 1+\frac{1}{4\kappa +2}.
\end{eqnarray*}
\end{proof}

\section{Proof of Theorems \ref{main1} and \ref{main2}}\label{Ø–¾}

\begin{proof}[Proof of Theorem \ref{main1}]
By Theorem 1B in \cite{Adamczewski3}, $\xi $ is transcendental, that is, $\xi $ is not an $A$-number.
Therefore, it is sufficient to prove that if $\xi $ is not a $U_1$-number, then $\xi $ is not a $U$-number.
For $n\geq n_0$, we take finite words $U_n , V_n$ over $P$ and positive rational numbers $w_n$ satisfying Lemma \ref{'g'ݍ‡'킹} (i)-(vi).
We define a positive integer sequence $(n_k)_{k\geq 0}$ by $n_{k+1}=4(\kappa +1)n_k$ for $k\geq 0$.
We set $r_k:=|U_{n_k}|, s_k:=|V_{n_k}|$, and $t_k:=|U_{n_k} V_{n_k}|$ for $k\geq 0$.
Then a straightforward computation shows that $2 t_k\leq t_{k+1}\leq c t_k, r_k\leq 2 \kappa s_k$ for $k\geq 0$, and $(s_k)_{k\geq 0}$ is strictly increasing, where $c=8(\kappa +1)^2$.
For $k\geq 0$, there exists an integer $p_k$ such that
\begin{eqnarray*}
\frac{p_k}{p^{s_k}-1}=\sum _{i=0}^{\infty } b_i ^{(k)} p^i
\end{eqnarray*}
where $(b_i ^{(k)})_{i\geq 0}$ is the infinite word $U_{n_k}\overline{V_{n_k}}$.
Since $\xi $ and $p_k/(p^{s_k}-1)$ have the same first $|U_{n_k}V_{n_k} ^{w_{n_k}}|$-th digits, we obtain
\begin{eqnarray*}
\left| \xi - \frac{p_k}{p^{s_k}-1} \right| _p\leq p^{-w t_k},
\end{eqnarray*}
where $w=1+1/(4\kappa +2)$.
Since the sequence $(s_k)_{k\geq 1}$ is strictly increasing, we may assume that $t_0\geq 3$.

Let $\alpha \in \lQ _p$ be an algebraic number of degree $d\geq 2$ with $H(\alpha )\geq \max (d+1,p^{s_0},27^{4\kappa +2})$.
We define an integer $j\geq 1$ by $p^{s_{j-1}}\leq H(\alpha )<p^{s_j}$ and a real number $\chi $ by $|\xi -\alpha |_p =H(\alpha )^{-\chi }$.
Without loss of generality, we may assume that $\chi >0$.
Put $M:=\max \{ m \in \lZ \mid p^{wc^{m-1}t_j}<H(\alpha )^{\chi } \} $.
In what follows, we estimate an upper bound of $M$.
Therefore, we may assume that $M\geq 1$.
Then we obtain $p^{w t_{j+h}}\leq p^{w c^{M-1}t_j}$ for all $0\leq h\leq M-1$.
Therefore, we have
\begin{eqnarray*}
|p^{s_{j+h}}\alpha -\alpha -p_{j+h}|_p & = & \left| \alpha - \frac{p_{j+h}}{p^{s_{j+h}}-1} \right| _p \\
& \leq & \max \left( \left| \xi - \frac{p_{j+h}}{p^{s_{j+h}}-1} \right| _p, |\xi - \alpha |_p\right) \leq p^{-w t_{j+h}}
\end{eqnarray*}
for $0\leq h\leq M-1$.
We define linear forms by
\begin{gather*}
L_{1\infty }(X,Y,Z)=X,\quad L_{2\infty }(X,Y,Z)=Y,\quad L_{3\infty }(X,Y,Z)=Z, \\
L_{1 p}(X,Y,Z)=X,\quad L_{2 p}(X,Y,Z)=Y,\quad L_{3 p}(X,Y,Z)=\alpha X-\alpha Y-Z,
\end{gather*}
and put ${\bf x}_h:=(p^{s_{j+h}},1,p_{j+h})$ for $0\leq h\leq M-1$.
By the proof of Lemma \ref{''³ãŒÀ}, we obtain
\begin{eqnarray*}
\prod _{v\in \{p,\infty \}}^{}\prod _{i=1}^{3} |L_{i v}({\bf x}_h)|_v\leq |{\bf x}_h|_{\infty } ^{-1/(4\kappa +2)}
\end{eqnarray*}
for all $0\leq h\leq M-1$.
We also have
\begin{eqnarray*}
H({\bf x}_h)=|{\bf x}_h|_{\infty }\geq p^{s_{j+h}}\geq H(\alpha )\geq \max \left( \left( \sqrt{d+1}H(\alpha ) \right) ^{1/12 d}, 27^{4\kappa +2}\right)
\end{eqnarray*}
for all $0\leq h\leq M-1$.
Hence, by Theorem \ref{subsp}, for all $0\leq h\leq M-1$, we obtain ${\bf x}_h$ in the union of $N$ proper linear subspaces of $\lQ ^3$, where $N=\lfloor 2^{25} 3^{39} 5^{10}(2\kappa +1)^9 \log (6(2\kappa +1)d) \log (2(2\kappa +1) \log 3 d) \rfloor $.

Assume that one of these linear subspaces of $\lQ ^3$ contains $L+1$ points of the set $\{{\bf x}_h \mid 0\leq h\leq M-1\}$, where $L=\lceil \log _2 ((2\kappa +1)(4d+6 +\log _p (2^{2d+1}(d+1)))) \rceil$.
It follows that there exist $(x,y,z)\in \lZ ^3 \setminus \{ {\bf 0} \}$ such that
\begin{eqnarray*}
x p^{s_{j+i_k}}+y+z p_{j+i_k}=0,\quad (0\leq k\leq L),
\end{eqnarray*}
where $0\leq i_0< i_1<\ldots <i_L<M$.
Since ${\bf x}_{i_0}$ and ${\bf x}_{i_1}$ are linearly independent, we chose $(x,y,z)\in \lZ ^3 \setminus \{ {\bf 0} \} $ such that $\max (|x|,|y|,|z|)\leq 2 p^{2 t_{j+i_1}}$ by Lemma \ref{hyper}.
Since $(s_k)_{k\geq 0}$ is strictly increasing, we have $z \not= 0$.
A straightforward computation shows that
\begin{eqnarray*}
(1-p^{s_{j+i_k}})\alpha = p^{s_{j+i_k}}\frac{x}{z} +\frac{y}{z}-(p^{s_{j+i_k}}\alpha -\alpha -p_{j+i_k}),\\
\alpha -\frac{y}{z}=p^{s_{j+i_k}}\alpha +p^{s_{j+i_k}}\frac{x}{z}-(p^{s_{j+i_k}}\alpha -\alpha -p_{j+i_k})
\end{eqnarray*}
for all $0\leq k\leq L$.
Therefore, we obtain
\begin{eqnarray*}
|\alpha |_p=|(1-p^{s_{j+i_k}})\alpha |_p & \leq & \max \left( p^{-s_{j+i_k}} \left| \frac{x}{z}\right| _p,  \left| \frac{y}{z}\right| _p, p^{-w t_{j+i_k}}\right) \\
& \leq & \max (|z|, p^{-w t_{j+i_k}})\leq 2 p^{2 t_{j+i_1}}.
\end{eqnarray*}
Hence, we have
\begin{eqnarray}\label{ˆê"Ô}
\left| \alpha -\frac{y}{z}\right| _p\leq \max (2 p^{2 t_{j+i_1}-s_{j+i_L}},|z|p^{-s_{j+i_L}},p^{-w t_{j+i_L}})=2 p^{2 t_{j+i_1}-s_{j+i_L}}.
\end{eqnarray}
It follows from Lemma \ref{Liouville inequality} that
\begin{eqnarray}
\left| \alpha -\frac{y}{z}\right| _p & \geq & 2^{-d}(d+1)^{-1}H(\alpha )^{-1} H\left( \frac{y}{z}\right) ^{-d} \nonumber \\
& \geq & 2^{-2 d}(d+1)^{-1}p^{-2 d t_{j+i_1}-s_j} \label{"ñ"Ô} .
\end{eqnarray}
By the properties of $(s_k)_{k\geq 0}$ and $(t_k)_{k\geq 0}$, we have
\begin{eqnarray}\label{ŽO"Ô}
s_{j+i_L}\geq \frac{t_{j+i_L}}{2\kappa +1}=\frac{1}{2\kappa +1}\frac{t_{j+i_L}}{t_{j+i_{L-1}}}\cdots \frac{t_{j+i_2}}{t_{j+i_1}}t_{j+i_1}\geq \frac{2^L t_{j+i_1}}{4\kappa +2}.
\end{eqnarray}
Applying (\ref{ˆê"Ô}), (\ref{"ñ"Ô}), and (\ref{ŽO"Ô}), we obtain
\begin{eqnarray*}
t_{j+i_1}\leq \frac{(4\kappa +2)\log _p (2^{2 d+1}(d+1))}{2^L-(4\kappa +2)(2 d+3)}\leq 2,
\end{eqnarray*}
which is contradiction.

Hence, we get $M\leq LN$.
By the definition of $M$, we have
\begin{eqnarray*}
H(\alpha )^{\chi } & \leq & p^{w c^M t_j} \leq p^{w c^{M+1}t_{j-1}} \\
& \leq & p^{w c^{M+1}(2\kappa +1)s_{j-1}} \leq H(\alpha )^{w c^{M+1}(2\kappa +1)}.
\end{eqnarray*}
Therefore, we obtain
\begin{eqnarray*}
|\xi -\alpha |_p\geq H(\alpha )^{-w c^{L N+1}(2\kappa +1)},
\end{eqnarray*}
which implies
\begin{eqnarray*}
w_d ^{*}(\xi )\leq \max (w_1 (\xi ),w c^{L N+1}(2\kappa +1)).
\end{eqnarray*}
This completes the proof.
\end{proof}

\begin{proof}[Proof of Theorem \ref{main2}]
We first assume that $\xi $ is not a $U_1$-number, that is, $w_1(\xi )$ is finite.
Then $\Dio ({\bf a})$ is finite by Lemma \ref{•Ð'¤}.

We next assume that $\Dio ({\bf a})$ is finite.
For $n\geq n_0$, take finite words $U_n , V_n$ and a rational number $w_n$ satisfying Lemma \ref{'g'ݍ‡'킹} (i)-(vi).
For $n\geq n_0$, we set rational numbers
\begin{eqnarray*}
\alpha _n:=\sum _{i=0}^{\infty }b_i ^{(n)} p^i
\end{eqnarray*}
where $(b_i ^{(n)})_{i\geq 0}$ is the infinite word $U_n \overline{V_n}$.
Since $\xi $ and $\alpha _n$ have the same first $|U_n V_n ^{w_n}|$-th digits, we obtain
\begin{eqnarray*}
\left| \xi -\alpha _n \right| \leq p^{-\left( 1+\frac{1}{4\kappa +2}\right) |U_n V_n|}.
\end{eqnarray*}
Take a real number $\delta $ which is greater than $\Dio ({\bf a})$.
Note that $\delta >1$.
By the definition of the Diophantine exponent, there exists an integer $n_1\geq n_0$ such that for all $n\geq n_1$
\begin{eqnarray*}
\left| \xi -\alpha _n \right| \geq p^{-\delta |U_n V_n|}.
\end{eqnarray*}
We define a positive integer sequence $(n_k)_{k\geq 1}$ by $n_{k+1}=2(\kappa +1)n_k$ for $k\geq 1$.
Set $\beta _k:=p^{|U_{n_k}V_{n_k}|}$ for $k\geq 1$.
It follows from Lemma \ref{'g'ݍ‡'킹} (iii),(vi) that for $n\geq 1$
\begin{eqnarray*}
\beta _k<\beta _{k+1}\leq \beta _k ^{4(\kappa +1)^2}.
\end{eqnarray*}
By Lemma \ref{''³ãŒÀ}, we have $H(\alpha _{n_k})\leq \beta _k$ for $k\geq 1$.
Hence, we obtain (\ref{last}) by Lemma \ref{–³—""x"äŠr}.
\end{proof}

\subsection*{Acknowledgements}
I would like to thank Prof.~Shigeki Akiyama for improving the language and structure of this paper.
I wish to thank the referee for a careful reading and several helpful comments.

\end{document}